\numberwithin{equation}{section}
\DeclareMathOperator{\RHom}{RHom}
\DeclareMathOperator{\Hom}{Hom}
\DeclareMathOperator{\HH}{HH}
\DeclareMathOperator{\hh}{hh}
\DeclareMathOperator{\id}{id}
\DeclareMathOperator{\opp}{op}
\DeclareMathOperator{\Hn}{H}
\DeclareMathOperator{\tr}{Tr}
\author{Fran\c{c}ois Petit}
\address{Max Planck Institute for Mathematics\\Vivatsgasse 7\\
53111 Bonn\\
Germany}
\email{petit@mpim-bonn.mpg.de}
\title{A Riemann-Roch Theorem for dg Algebras}
\begin{document}

\theoremstyle{plain} 
\newtheorem{thm}{Theorem}[section]
\newtheorem{cor}[thm]{Corollary}
\newtheorem{prop}[thm]{Proposition}
\newtheorem{lemme}[thm]{Lemma}
\newtheorem{conj}[thm]{Conjecture}
\newtheorem*{theoetoile}{Theorem} 
\newtheorem*{conjetoile}{Conjecture} 
\newtheorem*{theoetoilefr}{Théorème}
\newtheorem*{propetoilefr}{Proposition}

\theoremstyle{definition} 
\newtheorem{defi}[thm]{Definition}
\newtheorem{example}[thm]{Example}
\newtheorem{examples}[thm]{Examples}
\newtheorem{question}[thm]{Question}
\newtheorem{Rem}[thm]{Remark}
\newtheorem{Notation}[thm]{Notation}

\numberwithin{equation}{section}

\newcommand{\On}[1]{\mathcal{O}_{#1}}
\newcommand{\En}[1]{\mathcal{E}_{#1}}
\newcommand{\Fn}[1]{\mathcal{F}_{#1}} 
\newcommand{\tFn}[1]{\mathcal{\tilde{F}}_{#1}}
\newcommand{\hum}[1]{hom_{\mathcal{A}}({#1})}
\newcommand{\hcl}[2]{#1_0 \lbrack #1_1|#1_2|\ldots|#1_{#2} \rbrack}
\newcommand{\hclp}[3]{#1_0 \lbrack #1_1|#1_2|\ldots|#3|\ldots|#1_{#2} \rbrack}
\newcommand{\catMod}{\mathsf{Mod}}
\newcommand{\Der}{\mathsf{D}}
\newcommand{\Ds}{D_{\mathbb{C}}}
\newcommand{\DG}{\mathsf{D}^{b}_{dg,\mathbb{R}-\mathsf{C}}(\mathbb{C}_X)}
\newcommand{\lI}{[\mspace{-1.5 mu} [}
\newcommand{\rI}{] \mspace{-1.5 mu} ]}
\newcommand{\Ku}[2]{\mathfrak{K}_{#1,#2}}
\newcommand{\iKu}[2]{\mathfrak{K^{-1}}_{#1,#2}}
\newcommand{\Be}{B^{e}}
\newcommand{\op}[1]{#1^{\opp}}
\newcommand{\N}{\mathbb{N}}
\newcommand{\Ab}[1]{#1/\lbrack #1 , #1 \rbrack}
\newcommand{\Du}{\mathbb{D}}
\newcommand{\C}{\mathbb{C}}
\newcommand{\Z}{\mathbb{Z}}
\newcommand{\w}{\omega}
\newcommand{\K}{\mathcal{K}}
\newcommand{\Hoc}{\mathcal{H}\mathcal{H}}
\newcommand{\env}[1]{{\vphantom{#1}}^{e}{#1}}
\newcommand{\eA}{{}^eA}
\newcommand{\eB}{{}^eB}
\newcommand{\eC}{{}^eC}
\newcommand{\cA}{\mathcal{A}} 
\newcommand{\cB}{\mathcal{B}}
\newcommand{\cR}{\mathcal{R}}
\newcommand{\cL}{\mathcal{L}}
\newcommand{\cO}{\mathcal{O}}
\newcommand{\cM}{\mathcal{M}}
\newcommand{\cN}{\mathcal{N}}
\newcommand{\cK}{\mathcal{K}}
\newcommand{\cC}{\mathcal{C}}
\newcommand{\Hper}{\Hn^0_{\textrm{per}}}
\newcommand{\Dper}{\Der_{\mathrm{perf}}}
\newcommand{\Yo}{\textrm{Y}}
\newcommand{\gqcoh}{\mathrm{gqcoh}}
\newcommand{\coh}{\mathrm{coh}}
\newcommand{\cc}{\mathrm{cc}}
\newcommand{\qcc}{\mathrm{qcc}}
\newcommand{\qcoh}{\mathrm{qcoh}}
\newcommand{\obplus}[1][i \in I]{\underset{#1}{\overline{\bigoplus}}}
\newcommand{\Lte}{\mathop{\otimes}\limits^{\rm L}}
\newcommand{\pt}{\textnormal{pt}}
\newcommand{\A}[1][X]{\cA_{{#1}}}
\newcommand{\dA}[1][X]{\cC_{X_{#1}}}
\newcommand{\conv}[1][]{\mathop{\circ}\limits_{#1}}
\newcommand{\sconv}[1][]{\mathop{\ast}\limits_{#1}}
\newcommand{\reim}[1]{\textnormal{R}{#1}_!}
\newcommand{\roim}[1]{\textnormal{R}{#1}_\ast}
\newcommand{\ldetens}{\overset{\mathnormal{L}}{\underline{\boxtimes}}}
\newcommand{\br}{\bigr)}
\newcommand{\bl}{\bigl(}
\newcommand{\sC}{\mathscr{C}}
\newcommand{\ucat}{\mathbf{1}}
\newcommand{\ubtimes}{\underline{\boxtimes}}
\newcommand{\uLte}{\mathop{\underline{\otimes}}\limits^{\rm L}} 
\newcommand{\Lp}{\mathrm{L}p}

\frontmatter

\begin{abstract} 
Given a smooth proper dg-algebra $A$, a perfect dg $A$-module $M$ and an
endomorphism $f$ of $M$, we define the Hochschild class of the pair $(M,f)$ with
values in the Hochschild homology of the algebra $A$. Our main result is a Riemann-Roch type
formula involving the convolution of two such Hochschild classes.
\end{abstract}

\maketitle

\section{Introduction}

An algebraic version of the Riemann-Roch formula was recently obtained
by D. Shklyarov \cite{shklyarov} in the framework of the so-called
noncommutative derived algebraic geometry. More precisely, motivated by the well
known result of A. Bondal and M. Van den Bergh about "dg-affinity" of classical
varieties, D. Shklyarov has obtained a formula for the Euler characteristic of the
Hom-complex between two perfect modules over a dg-algebra in terms of the Euler
classes of the modules.

On the other hand, M. Kashiwara and P. Schapira \cite{KS3} initiated an
approach to the Riemann-Roch theorem in the framework of deformation quantization modules
(DQ-modules)  with the view towards applications to various index type theorems. 
Their approach is based on Hochschild homology which,
in this setup, admits a description in terms of the dualizing complexes in
the derived categories of coherent DQ-modules.

The present paper is an attempt to extract some algebraic aspects of this latter approach with the hope that the resulting techniques will provide a uniform point of view for proving Riemann-Roch type results for DQ-modules, D-modules etc. (e.g. the Riemann-Roch-Hirzebruch formula for traces of differential operators obtained by M. Engeli and G. Felder \cite{Felder}). 
 In this paper, we obtain a Riemann-Roch theorem in the dg setting, similarly as D. Shklyarov.
However, our approach is really different of the latter one in that we avoid the categorical definition of the Hochschild homology, and use instead  the Hochschild homology of the ring $A$ expressed in terms of dualizing objects. Our result is slightly more general than the one obtained in \cite{shklyarov}. Instead of a kind of non-commutative Riemann-Roch theorem, we rather prove a kind of non-commutative Lefschetz theorem. Indeed, it involves certain Hochschild classes of \textit{pairs} $(M,f)$ where $M$ is a perfect dg module over a smooth proper dg
algebra and $f$ is an endomorphism of $M$ in the derived category of perfect
$A$-modules. Moreover, our approach follows \cite{KS3}. In particular, we have in our setting relative finiteness and duality results (Theorem \ref{tensor} and Theorem \ref{contraction})  that may be compared with \cite[Theorem 3.2.1]{KS3} and \cite[Theorem 3.3.3]{KS3}. Notice that the idea to approach the classical Riemann-Roch theorem for smooth projective varieties via their Hochschild homology goes back at least to the work of N. Markarian \cite{Markarian}. This approach was developed further by A.
C\u{a}ld\u{a}raru \cite{CaldararuI}, \cite{Caldararu1/2} and A. C\u{a}ld\u{a}raru, S. Willerton
\cite{caldararu} where, in particular, certain purely categorical aspects of the
story were emphasized. The results of \cite{CaldararuI} suggested
that a Riemann-Roch type formula might exist for triangulated categories of quite
general nature, provided they possess Serre duality. In this categorical framework,
the role of the Hochschild homology is played by the space of morphisms from the
inverse of the Serre functor to the identity endofunctor. 
In a sense, our result can be viewed as a non-commutative generalization of A.
C\u{a}ld\u{a}raru's version of the topological Cardy condition \cite{CaldararuI}. Our
original motivation was different though it also came from the theory of
DQ-modules \cite{KS3}.\\ 

Here is our main result:

\begin{theoetoile}
Let $A$ be a proper, homologically smooth dg algebra, $M \in \Der_{\mathrm{perf}}(A), \; f \in \Hom_A(M,M)$ and $ N \in
\Der_{\mathrm{perf}}(\op{A}), \; g \in \Hom_{\op{A}}(N,N)$.

Then

\begin{equation*}
\hh_k(N \Lte_{A} M, g \Lte_{A} f)=  \hh_{\op{A}}(N,g) \cup \hh_A(M,f),
\end{equation*}
where $\cup$ is a pairing between the corresponding Hochschild
homology groups and where $\hh_A(M,f)$ is the Hochschild class of the pair $(M,f)$ with 
value in the Hochschild homology of $A$.
\end{theoetoile}

The above pairing is obtained using Serre duality in the derived category of perfect complexes 
and, thus, it strongly resembles analogous pairings, studied in
some of the references previously mentioned (cf. \cite{CaldararuI}, \cite{KS3},
\cite{Shklyarov1}). We prove that various methods of constructing a pairing on Hochschild homology lead to the same result. Notice that in  \cite{Ramadoss_Pair},  A. Ramadoss studied the links between different pairing on Hochschild homology.

To conclude, we would like to mention the recent paper by A.
Polishchuk and A. Vaintrob \cite{Polishchuk} where a categorical version of the
Riemann-Roch theorem was applied in the setting of the so-called Landau-Ginzburg
models (the categories of matrix factorizations). We hope that our results, in
combination with some results by D. Murfet \cite{Murfet}, may provide an alternative way
to derive the Riemann-Roch formula for singularities.\\

\noindent
\textbf{Acknowledgement.} I would like to express my gratitude to my advisor P. Schapira for suggesting this problem to me, for his guidance and for his support. I am very grateful to Y. Soibelman for carefully explaining his article \cite{KoSY} to me and to B. Keller for patiently answering my questions about dg categories. I would like to thank A. C\u{a}ld\u{a}raru for explaining his articles to me and K. Ponto for explanations concerning her works \cite{ponto}. I would like to warmly thank D. Shklyarov and G. Ginot for many useful discussions and precious advices. Finally, I would like to thank Damien Calaque and Michel Vaquié for their thorough reading of the manuscript and numerous remarks which have allowed substantial improvements.

\section{Conventions}\label{notations}

All along this paper $k$ is a field of characteristic zero. A $k$-algebra is a $k$-module $A$ equiped  with an associative $k$-linear multiplication admitting a two sided unit $1_A$.  

All the graded modules considered in this paper are cohomologically $\Z$-graded. We abbreviate differential graded module (resp. algebra) by dg module (resp. dg algebra). 

If $A$ is a dg algebra and $M$ and $N$ are dg $A$-modules, we write $\Hom_A^\bullet(M,N)$ for the total $\Hom$-complex.  

If $M$ is a dg $k$-module, we define $M^\ast=\Hom_k^\bullet(M,k)$ where $k$ is considered as the dg $k$-module whose 0-th components is $k$ and other components are zero.

We write $\otimes$ for the tensor product over $k$. If $x$ is an homogeneous element of a differential graded module we denote by $|x|$ its degree. 

If $A$ is a  dg algebra we will denote by $\op{A}$ the opposite dg algebra. It is the same as a differential graded $k$-module but the multiplication is given by $a \cdot b= (-1)^{|a||b|}b a$. We denote by $A^e$ the dg algebra $A \otimes \op{A}$ and by $\eA$ the algebra $\op{A} \otimes A$.\\

By a module we understand a left module unless otherwise specified. If $A$ and $B$ are dg algebras, $A$-$B$ bimodules will be considered as left $A \otimes \op{B}$-modules via the action 
\begin{equation*}
a \otimes b \cdot m = (-1)^{|b||m|} amb.
\end{equation*}
If we want to emphasize the left (resp. right) structure of an $A$-$B$ bimodule we will write $_AM$ (resp. $M_B$). If $M$ is an $A \otimes \op{B}$-modules, then we write $\op{M}$ for the corresponding $\op{B} \otimes A$-module. Notice that $(\op{M})^\ast \simeq \op{(M^\ast)}$ as $B\otimes \op{A}$-modules.\\

\section{Perfect modules}\label{Serre}

\subsection{Compact objects}
We recall some classical facts concerning compact objects in triangulated categories. We refer the reader to \cite{Nee_book}.

Let $\mathcal{T}$ be a triangulated category admitting arbitrary small coproducts.

\begin{defi}
 An object $M$ of $\mathcal{T}$ is compact if for each family $(M_i)_{i \in I}$ of objects of $\mathcal{T}$ the canonical morphism 
\begin{equation}
\underset{i \in I}{\bigoplus}\Hom_{\mathcal{T}}(M,M_i) \to \Hom_\mathcal{T}(M,\underset{i \in I}{\bigoplus} M_i)
\end{equation}
is an isomorphism. We denote by $\mathcal{T}^{c}$ the full subcategory of $\mathcal{T}$ whose objects are the compact objects of $\mathcal{T}$.
\end{defi}

Recall that a triangulated subcategory of $\mathcal{T}$ is called thick if it is closed under isomorphisms and direct summands.

\begin{prop}
The category $\mathcal{T}^c$ is a thick subcategory of $\mathcal{T}$.
\end{prop}

We prove the following fact that will be of constant use.

\begin{prop} \label{thickiso}
Let $\mathcal{T}$ and $\mathcal{S}$ be two triangulated categories and $F$ and $G$ two functors of triangulated categories from $\mathcal{T}$ to $\mathcal{S}$ with a natural transformation $\alpha:F \Rightarrow G$. Then the full subcategory $\mathcal{T}_\alpha$ of $\mathcal{T}$ whose objects are the $X$ such that $\alpha_X:F(X) \to G(X)$ is an isomorphism, is a thick subcategory of $\mathcal{T}$.
\end{prop}

\begin{proof}
The category $\mathcal{T}_\alpha$ is non-empty since $0$ belongs to it and it is stable by shift since $F$ and $G$ are functors of triangulated categories. Moreover, the category $\mathcal{T}_\alpha$ is a full subcategory of a triangulated category. Thus, to verify that $\mathcal{T}_\alpha$ is triangulated, it only remains to check that it is stable by taking cones. Let $f: X \to Y$ be a morphism of $\mathcal{T}_\alpha$. Consider a distinguished triangle in $\mathcal{T}$
\begin{equation*}
X \stackrel{f}{\to} Y \to Z \to X[1].
\end{equation*}
We have the following diagram
\begin{equation*}
\xymatrix{ F(X) \ar[r]^-{F(f)} \ar[d]^-{\wr}_-{\alpha_X} & F(Y) \ar[r] \ar[d]^-{\wr}_-{\alpha_Y} & F(Z) \ar[r] \ar[d]_-{\alpha_Z} & F(X[1])  \ar[d]^-{\wr}_-{\alpha_X[1]}\\
G(X) \ar[r]^-{G(f)}  & G(Y) \ar[r]  & G(Z) \ar[r]  & G(X[1]).   
}
\end{equation*}
By the five Lemma it follows that the morphism $\alpha_Z$ is an isomorphism. Therefore, $Z$ belongs to $\mathcal{T}_\alpha$. This implies that the category $\mathcal{T}_\alpha$ is triangulated.

It is clear that $\mathcal{T}_\alpha$ is closed under isomorphism. Since $F$ and $G$ are functors of triangulated categories they commutes with finite direct sums. Thus, $\mathcal{T}_\alpha$ is stable under taking direct summands. It follows that $\mathcal{T}_\alpha$ is a thick subcategory of $\mathcal{T}$.
\end{proof}

\begin{defi}\label{defcomge}
The triangulated category $\mathcal{T}$ is compactly generated if there is a set $\mathcal{G}$ of compact objects $G$ such that an object $M$ of $\mathcal{T}$ vanishes if and only if we have $\Hom_{\mathcal{T}}(G[n],M) \simeq 0$ for every $G \in \mathcal{G}$ and $n \in \Z$.
\end{defi}

\begin{thm}[\cite{Nee_comp, Rav}]\label{description}
Let $\mathcal{G}$ be as in Definition \ref{defcomge}. An object of $\mathcal{T}$ is compact if and only if it is isomorphic to a direct factor of an iterated triangle extension of copies of object of $\mathcal{G}$ shifted in both directions.
\end{thm}

\begin{Rem}\label{remthick}
The above theorem implies that the category of compact objects is the smallest thick subcategory of $\mathcal{T}$ containing $\mathcal{G}$.
\end{Rem}

\subsection{The category of perfect modules}

In this section, following \cite{Keller_dg}, we recall the definition of the category of perfect modules.

Let $A$ be a differential graded algebra. One associates to $A$ its category of differential graded modules, denoted $\mathcal{C}(A)$, whose objects are the differential graded modules and whose morphisms are the morphisms of chain complexes.

Recall that the category $\mathcal{C}(A)$ has a compactly generated model structure, called the projective structure, where the weak equivalences are the quasi-isomorphisms, the fibrations are the level-wise surjections. The reader may refer to \cite{Hovey} for model categories  and to \cite[ch.11]{Fresse} for a detailed account on the projective model structure of $\mathcal{C}(A)$.

The derived category $\Der(A)$ is the localisation of $\mathcal{C}(A)$ with respects to the class of quasi-isomorphisms. The category $\Der(A)$ is a triangulated category, it admits arbitrary coproducts and is compactly generated by the object $A$.  
Theorem \ref{description} leads to Proposition \ref{descriptionAmod} which allows us to define perfect modules in terms of compact objects.

\begin{prop}\label{descriptionAmod}
An object of $\Der(A)$ is compact if and only if it is isomorphic to a direct factor of an iterated extension of copies of $A$ shifted in both directions.
\end{prop}

\begin{defi}
A differential graded module is perfect if it is a compact object of $\Der(A)$. We write $\Dper(A)$ for the category of compact objects of $\Der(A)$.
\end{defi}

\begin{Rem}
This definition implies immediatly that $M$ is a perfect $k$-module if and only if $\sum_i \dim_k \Hn^i(M) < \infty$.
\end{Rem}

A direct consequence of Proposition \ref{descriptionAmod} is

\begin{prop}\label{cons}
Let A and B be two dg algebras and $F:\Der(A) \rightarrow \Der(B)$ a functor of triangulated categories. Assume that  $F(A)$ belongs to $\Dper(B)$. Then, for any $X$ in $\Dper(A)$ ,  $F(X)$ is an object of $\Dper(B)$.
\end{prop}
%
%

\begin{prop} \label{isoext}
Let A and B be two dg algebras and $F,G:\Der(A) \rightarrow \Der(B)$ two functors of triangulated categories and $\alpha:F \Rightarrow G$ a natural transformation. If $\alpha_A:F(A) \to G(A)$ is an isomorphism then $\alpha_M:F(M) \to G(M)$ is an isomorphism for every $M \in \Dper(A)$.
\end{prop}

\begin{proof}
By Proposition \ref{thickiso}, the category $\mathcal{T}_\alpha$ is thick. This category contains $A$ by hypothesis. It follows by Remark \ref{remthick} that $\mathcal{T}_\alpha=\Dper(A)$.
\end{proof}

\begin{defi}
A dg $A$-module $M$ is a finitely generated semi-free module if it can be obtain by iterated extensions of copies of $A$ shifted in both directions. 
\end{defi}

\begin{prop}
\begin{enumerate}[(i)]
\item  Finitely generated semi-free modules are cofibrant objects of $\mathcal{C}(A)$ endowed with the projective structure.  
\item A perfect module is quasi-isomorphic to a direct factor of a finitely generated semi-free module.
\end{enumerate}
\end{prop}

\begin{proof}
\begin{enumerate}[(i)]
\item is a direct consequence of \cite[Proposition 11.2.9]{Fresse}.

\item follows from Proposition \ref{descriptionAmod} and from the facts that, in the projective structure, every object is fibrant and finitely generated semi-free modules are cofibrant.%
\end{enumerate}%
\end{proof}

\begin{Rem}
The above statement is a special case of \cite[Proposition 2.2]{Toen}.
\end{Rem}

\subsection{Finiteness results for perfect modules}

We summarize some finiteness results for perfect modules over a dg algebra satisfying suitable finiteness and regularity hypothesis. The main reference for this section is \cite{Toen}. 
Most of the statements of this subsection and their proofs can be found in greater generality in \cite[§2.2]{Toen}. For the sake of completeness, we give the proof of these results in our specific framework.

\begin{defi} 
A dg $k$-algebra $A$ is said to be proper if it is perfect over $k$. 
\end{defi}


The next theorem, though the proof is much easier, can be thought as a dg analog to the theorem asserting the finiteness of proper direct images for coherent $\mathcal{O}_X$-modules.

\begin{thm}\label{tensor}
Let $A$, $B$ and $C$ be dg algebras. Assume $B$ is a proper dg algebra. Then the functor $\cdot \Lte_B \cdot : \Der(A \otimes \op{B}) \times \Der(B \otimes \op{C}) \to \Der(A \otimes \op{C})$  induces a functor $\cdot \Lte_B \cdot : \Dper(A \otimes \op{B}) \times \Dper(B \otimes \op{C}) \to \Dper(A \otimes \op{C})$.
\end{thm}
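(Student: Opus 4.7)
The plan is to reduce Theorem~\ref{tensor} to a chain of applications of Proposition~\ref{cons}. Fix $M \in \Prf(A \otimes \op{B})$ and consider the dg functor
\begin{equation*}
F_M : \Mod(B \otimes \op{C}) \to \Mod(A \otimes \op{C}), \quad X \mapsto M \otimes_B X.
\end{equation*}
By Proposition~\ref{cons}, it suffices to verify that $F_M(B \otimes \op{C})$ lies in $\Prf(A \otimes \op{C})$. There is a canonical isomorphism of $A \otimes \op{C}$-modules
\begin{equation*}
M \otimes_B (B \otimes \op{C}) \simeq M \otimes \op{C},
\end{equation*}
where the right-hand side carries the $A$-action inherited from $M$ (after forgetting its right $B$-structure) and the $\op{C}$-action from the second factor. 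Thus the theorem reduces to showing that $M \otimes \op{C}$ is perfect over $A \otimes \op{C}$.

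The induction functor $(\cdot) \otimes \op{C} : \Mod(A) \to \Mod(A \otimes \op{C})$ is a dg functor sending $A$ to the free module $A \otimes \op{C}$, so Proposition~\ref{cons} applied once more reduces the problem to showing that $M$, viewed via the forgetful functor $\Mod(A \otimes \op{B}) \to \Mod(A)$, is perfect as an $A$-module. A third application of Proposition~\ref{cons}, this time to the forgetful functor itself, reduces the task to proving $A \otimes \op{B} \in \Prf(A)$.

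This is the only step that uses the hypothesis that $B$ is proper. Since $k$ is a field, every dg $k$-module $X$ decomposes in $\Hn^0(\Mod(k))$ as a direct sum of its cohomology (viewed with zero differential) and a contractible complex; when $X$ has finite-dimensional total cohomology, this cohomology is a finite direct sum of shifted copies of $k$, which already lies in $\Prf(k)$. Applied to $\op{B}$, this shows $\op{B} \in \Prf(k)$. A final appeal to Proposition~\ref{cons} with the dg functor $A \otimes_k (\cdot) : \Mod(k) \to \Mod(A)$ (sending $k$ to $A$) then delivers $A \otimes \op{B} \in \Prf(A)$, closing the argument.

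The argument is essentially formal: a telescoping of four applications of Proposition~\ref{cons}. I expect no serious obstacle; the only genuinely non-categorical input is the decomposition of dg $k$-modules with finite-dimensional cohomology, which crucially uses that $k$ is a field. The main care to be taken is bookkeeping of which module structures are being forgotten or tensored over at each step, and checking that the intermediate isomorphisms respect the $A \otimes \op{C}$-module structures and not merely the underlying $k$-module structures.
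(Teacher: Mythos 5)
Your proof is correct and rests on exactly the same two ingredients as the paper's: Proposition~\ref{cons} to reduce to the free generators, and the homotopy equivalence of $B$ with its (finite-dimensional, by properness) cohomology over the field $k$. The only difference is organizational: the paper reduces in both variables at once to the single check $(A\otimes\op{B})\otimes_B(B\otimes\op{C})\simeq A\otimes B\otimes\op{C}\in\Prf(A\otimes\op{C})$, whereas you fix $M$ and chain four one-variable applications of Proposition~\ref{cons}, which has the merit of making explicit the implicit two-variable use of that proposition in the paper.
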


\begin{proof}
According to Proposition \ref{cons}, we only need to check that $(A \otimes \op{B}) \Lte_B (B \otimes \op{C}) \simeq A \otimes \op{B} \otimes \op{C} \in \Dper(A \otimes \op{C})$. In $\mathcal{C}(k)$, $B$ is homotopically equivalent to $\Hn(B):=\bigoplus_{n \in \Z} \Hn^n(B)[n]$ since $k$ is a field. Then, in $\mathcal{C}(A \otimes \op{C})$, $ A \otimes \op{B} \otimes \op{C}$ is homotopically equivalent to $A \otimes \Hn(\op{B}) \otimes \op{C}$ which is a finitely generated free $A\otimes \op{C}$-module since $B$ is proper.
\end{proof}

We recall a regularity condition for dg algebra called homological smoothness, \cite{KoSY}, \cite{Toen}.

\begin{defi}
A dg-algebra $A$ is said to be homologically smooth if $A \in \Der_{\mathrm{perf}}(A^e)$.
\end{defi}

\begin{prop}
The tensor product of two homologically smooth dg-algebras is an homologically smooth dg-algebra. 
\end{prop}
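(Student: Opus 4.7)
The plan is to reduce the statement to two successive applications of Proposition \ref{cons}, after identifying the enveloping algebra $(A \otimes B)^e$ with $A^e \otimes B^e$.

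First I would observe that there is an isomorphism of dg algebras
\begin{equation*}
(A \otimes B)^e = (A \otimes B) \otimes \op{(A \otimes B)} \;\simeq\; A \otimes \op{A} \otimes B \otimes \op{B} = A^e \otimes B^e,
\end{equation*}
obtained from the canonical identification $\op{(A \otimes B)} \simeq \op{A} \otimes \op{B}$ together with the symmetry constraint swapping the middle two factors (which introduces only Koszul signs and thus is an isomorphism of dg algebras). Under this identification, the diagonal $A \otimes B$-bimodule $A \otimes B$ corresponds to $A \otimes B$ regarded as a left $A^e \otimes B^e$-module with $A^e$ acting on the first factor and $B^e$ on the second. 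So it suffices to prove that $A \otimes B \in \Prf(A^e \otimes B^e)$.

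Next I would use Proposition \ref{cons} twice. For the first step, consider the dg functor
\begin{equation*}
F: \Mod(A^e) \longrightarrow \Mod(A^e \otimes B^e), \qquad M \longmapsto M \otimes B^e,
\end{equation*}
where $A^e$ acts on the first factor and $B^e$ on the second. Then $F(A^e) = A^e \otimes B^e$ is a free $A^e \otimes B^e$-module, hence perfect. Since $A \in \Prf(A^e)$ by hypothesis, Proposition \ref{cons} gives $F(A) = A \otimes B^e \in \Prf(A^e \otimes B^e)$. For the second step, consider
\begin{equation*}
G: \Mod(B^e) \longrightarrow \Mod(A^e \otimes B^e), \qquad N \longmapsto A \otimes N.
\end{equation*}
Then $G(B^e) = A \otimes B^e$, which is perfect by the first step. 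Since $B \in \Prf(B^e)$ by hypothesis, Proposition \ref{cons} gives $G(B) = A \otimes B \in \Prf(A^e \otimes B^e)$, which is what we wanted.

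The only mildly delicate point is verifying that the algebra identification in the first step is indeed an isomorphism of dg algebras with the right bimodule structure on $A \otimes B$; this is a routine check on generators using the Koszul sign rule, so I do not expect any real obstacle. The two applications of Proposition \ref{cons} are then entirely formal.
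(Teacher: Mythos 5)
Your proof is correct; the paper itself dismisses this proposition with the single word ``Obvious,'' and your two-step application of Proposition \ref{cons} through the identification $(A\otimes B)^e \simeq A^e \otimes B^e$ is exactly the standard justification. The only detail worth making explicit is that homological smoothness places $A$ and $B$ in $\Der_{\textrm{per}}$ of their enveloping algebras rather than in $\Prf$ on the nose, so one should first replace them by perfect resolutions $pA$ and $pB$ before invoking Proposition \ref{cons}, and then observe that $pA \otimes pB$ is quasi-isomorphic to $A \otimes B$ because $k$ is a field.
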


\begin{proof}
Obvious.
\end{proof}

There is the following characterization of perfect modules over a proper homologically smooth dg algebra extracted from \cite[Corollary 2.9]{Toen}.

\begin{thm} \label{dual vect}
Let A be a proper dg algebra. Let $N \in \Der(A)$. 
\begin{enumerate}[(i)]
\item If $N \in \Der_{\mathrm{perf}}(A)$ then  $N$ is perfect over $k$. 
\item If $A$ is homologically smooth and  N is perfect over $k$ then $N \in \Der_{\mathrm{perf}}(A)$. 
\end{enumerate}
\end{thm}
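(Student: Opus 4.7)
My plan is to prove the two statements separately by thick-envelope arguments, using the characterization $\Dper(A) = \langle A \rangle$ (the last proposition of the previous subsection).

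For (i), I would introduce the strictly full subcategory $\mathcal{C} \subset \Der(A)$ consisting of those $N$ with $\sum_n \dim \Hn^n(N) < \infty$, and verify that $\mathcal{C}$ is thick. Closure under shifts is immediate; closure under cones follows from the long exact cohomology sequence, which bounds $\dim \Hn^n(\cone f)$ by the sum of two finite dimensions and which forces these dimensions to vanish outside a bounded range; closure under direct summands is clear since a subspace of a finite-dimensional vector space is finite-dimensional. Properness of $A$ places $A$ in $\mathcal{C}$, so $\Dper(A) = \langle A \rangle \subseteq \mathcal{C}$.

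For (ii), the idea is to run the analogous argument on the bimodule side, transferring perfectness from $A^e$ to $A$ via the functor $F := (\cdot) \Lte_A N \colon \Der(A^e) \to \Der(A)$ (using the right $A$-structure of a bimodule to form the tensor product). Because $k$ is a field and $N$ has finite total cohomology, $N$ is formal in $\Der(k)$: it is quasi-isomorphic to $\bigoplus_n \Hn^n(N)[-n]$, a bounded complex of finite-dimensional $k$-vector spaces. Consequently $A \otimes_k N$ is quasi-isomorphic to a finite direct sum of shifts of $A$, hence lies in $\Dper(A)$. A direct computation then yields $F(A^e) = (A \otimes \op{A}) \Lte_A N \simeq A \otimes_k N \in \Dper(A)$. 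Setting $\mathcal{C}' := \{P \in \Der(A^e) \mid F(P) \in \Dper(A)\}$, the class $\mathcal{C}'$ is thick because $F$ is triangulated and $\Dper(A)$ itself is thick; thus $\mathcal{C}' \supseteq \langle A^e \rangle = \Dper(A^e)$. Homological smoothness of $A$ gives $A \in \Dper(A^e)$, so $N \simeq A \Lte_A N = F(A) \in \Dper(A)$.

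The main technical point to secure is the identification $F(P) \simeq P \otimes_A N$ when $P$ is a perfect representative of an object of $\Dper(A^e)$, so that the underived tensor product really computes the derived one. This is provided by Proposition \ref{conservation proj} applied on the right-hand $A$-factor of $A^e = A \otimes \op{A}$, which guarantees that such a $P$ is homotopically projective as a right $A$-module. The base identification $(A \otimes \op{A}) \otimes_A N \simeq A \otimes_k N$ of left $A$-modules is then the standard free-module computation, and propagates to all perfect $A^e$-modules by the thick envelope description used above.
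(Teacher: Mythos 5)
Your proof is correct and follows essentially the same route as the paper: part (i) is the standard thick-subcategory argument the paper leaves as ``clear'', and for part (ii) the paper likewise applies $\cdot \otimes_A pN$ to a perfect resolution of the bimodule $A$, identifies $A^e \otimes_A N \simeq A \otimes_k N \simeq A \otimes_k \Hn(N)$ using formality over the field $k$, and concludes via $\Dper(A^e)=\langle A^e \rangle$ and homological smoothness. The only cosmetic differences are that you pull back the thick subcategory $\Dper(A)$ along the functor where the paper pushes $\langle A^e \rangle$ forward, and that you secure the derived/underived identification via Proposition \ref{conservation proj} on the right $A$-factor rather than by taking a homotopically projective resolution of $N$.
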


\begin{proof}
We follow the proof of \cite{Shklyarov1}.

\textit{(i)} Apply Proposition \ref{cons}.\\

\textit{(ii)} Assume that $N \in \Der(A)$ is perfect over $k$. 
Let us show that the triangulated functor $\cdot \Lte_A N:\Der(A^e) \to \Der(A)$ induces a triangulated functor $\cdot \Lte_A N:\Dper(A^e) \to \Dper(A)$. Let $pN$ be a cofibrant replacement of $N$. Then
\begin{equation*}
A^e \Lte_A N \simeq A^e \otimes_A pN \simeq A \otimes pN.
\end{equation*}%
In $\mathcal{C}(k)$, $pN$ is homotopically equivalent to $\Hn(pN):=\bigoplus_{n \in \Z} \Hn^n(pN)[n]$. Thus, there is an isomorphism in $\Der(A)$ between $A \otimes_k pN$ and $A \otimes_k \Hn(pN)$. The dg A-module $A \otimes_k \Hn(pN)$ is perfect. Thus, by Proposition \ref{cons}, the functor $\cdot \Lte_A N$ preserves perfect modules. Since $A$ is homologically smooth, $A$ belongs to $\Dper(A^e)$. Then, $A \Lte_A N \simeq N$ belongs to $\Dper(A)$.  
%
\end{proof}

A similar argument leads to (see \cite[Lemma 2.6]{Toen})

\begin{lemme}\label{Ext finite}
If A is a proper algebra then $\Dper(A)$ is Ext-finite.
\end{lemme}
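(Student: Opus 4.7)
The plan is to reduce the statement to a claim about $\Dper(A)$ and then run a thick-subcategory argument anchored on the module $A$ itself.

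First I would observe that Ext-finiteness is a derived notion: for $X,Y \in \Hper(A)$ one has
\begin{equation*}
\bigoplus_n \Hom_{\Der(A)}(X,Y[n]) \simeq \bigoplus_n \Hn^n(\RHom_A(X,Y)),
\end{equation*}
and under the localization functor $Q : \Hper(A) \to \Dper(A)$ the acyclic ``remainder'' of any object in $\Hn^0(\Prf(A)) \star \Hn^0(\Mod_{ac}(A))$ is killed. So it suffices to prove that for any $M, N \in \Dper(A)$ the total dimension $\sum_n \dim \Hn^n(\RHom_A(M,N))$ is finite. Using the equivalence $\Hn^0(\Prf(A)) \simeq \Dper(A)$, I can in fact take $M$, $N$ to lie in $\Hn^0(\Prf(A))$, which will let me perform d\'evissage along the defining construction of perfect modules.

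Next, fix a perfect module $N$ and consider the strictly full subcategory
\begin{equation*}
\mathcal{C}_N = \{\, M \in \Dper(A) \mid \textstyle\sum_n \dim \Hn^n(\RHom_A(M,N)) < \infty \,\}.
\end{equation*}
I would check that $\mathcal{C}_N$ is thick in $\Der(A)$: it is clearly closed under shifts; closure under cones follows from the long exact sequence obtained by applying $\RHom_A(\cdot, N)$ to a distinguished triangle, together with the fact that a finite-dimensional $k$-vector space sandwiched between two finite-dimensional spaces is finite-dimensional; and closure under direct summands is immediate since a subspace of a finite-dimensional space is finite-dimensional.

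The base case is $M = A$: there $\RHom_A(A, N) \simeq N$, and since $N$ is perfect over the proper algebra $A$, Theorem \ref{dual vect}(i) gives $\sum_n \dim \Hn^n(N) < \infty$, so $A \in \mathcal{C}_N$. By the proposition identifying $\Dper(A)$ with the thick envelope $\langle A \rangle$, we conclude $\Dper(A) \subset \mathcal{C}_N$, i.e.\ $\RHom_A(M,N)$ has finite total cohomology for all perfect $M, N$, which is the desired Ext-finiteness. The only mildly delicate point is the very first reduction, namely making sure that acyclic summands in $\Hper(A)$ really do not contribute to the derived Hom; this is where the homotopical projectivity of perfect modules (established earlier) and the vanishing of $\Hom_{\Hn^0(\Mod(A))}$ out of a homotopically projective object into an acyclic module are used.
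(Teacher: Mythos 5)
Your proof is correct and is exactly the argument the paper leaves implicit: the paper states this lemma without proof as a ``direct consequence'' of Theorem \ref{dual vect}, and your d\'evissage --- base case $\RHom_A(A,N)\simeq N$ finite-dimensional by Theorem \ref{dual vect}(i), then bootstrapping through the thick subcategory $\mathcal{C}_N$ using $\Dper(A)=\langle A\rangle$ --- is the standard way to make that consequence explicit. Your opening remark about passing from $\Hper(A)$ to $\Dper(A)$ is also consistent with how the paper itself uses the lemma.
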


\subsection{Serre duality for perfect modules}
In this subsection, we recall some facts concerning Serre duality for perfect modules over a dg algebra and give various forms of the Serre functor in this context. References are made to \cite{BKSerre}, \cite{Gin}, \cite{shklyarov}.

Let us recall the definition of a Serre functor, \cite{BKSerre}.
\begin{defi}
Let $\mathcal{C}$ be a $k$-linear  Ext-finite triangulated category. A Serre functor $S:\mathcal{C} \rightarrow \mathcal{C}$ is an autoequivalence of $\mathcal{C}$ such that there exist an isomorphism
\begin{equation}
\Hom_{\mathcal{C}}(Y,X)^{\ast} \simeq \Hom_{\mathcal{C}}(X,S(Y))
\end{equation} 
functorial with respect to $X$ and $Y$ where $\ast$ denote the dual with respect to $k$.
If it exists, such a functor is unique up to natural isomorphism.
\end{defi}

\begin{Notation}
We set $\mathbb{D}^\prime_A=\RHom_A(\cdot,A): \op{(\Der(A))} \to \Der(\op{A})$.
\end{Notation}

\begin{prop}\label{dual equi}
The functor $\Du^{\prime}_A$ preserves perfect modules and induces an equivalence $\op{(\Der_{\mathrm{perf}}(A))} \rightarrow \Der_{\mathrm{perf}}(\op{A})$. When restricted to perfect modules, $\Du^{\prime}_{\op{A}} \circ \Du^{\prime}_A \simeq \id$.
\end{prop}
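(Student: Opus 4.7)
The plan is to reduce everything to the case $M = A$ via the fact, established just before the statement, that $\Dper(A)$ is the smallest thick subcategory of $\Der(A)$ containing $A$.

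First I would verify that $\Du_A$ sends $A$ to a perfect object. There is a canonical isomorphism $\RHom_A(A,A) \simeq A$ in $\Der(\op{A})$, using $A$ itself as its own homotopically projective resolution; since $A$ is perfect over itself (viewed as an $\op{A}$-module on the right), $\Du_A(A) \in \Dper(\op{A})$. Next, because $\RHom_A(\cdot, A)$ is a triangulated contravariant functor between derived categories, it sends distinguished triangles to distinguished triangles and commutes with direct summands. The full subcategory
\[
\mathcal{S} = \{\, M \in \Der(A) \mid \Du_A(M) \in \Dper(\op{A}) \,\}
\]
is therefore a thick subcategory of $\Der(A)$ containing $A$, so $\Dper(A) = \langle A \rangle \subset \mathcal{S}$. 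This gives the first assertion: $\Du_A$ restricts to a functor $\op{(\Dper(A))} \to \Dper(\op{A})$. The symmetric argument applied to $\op{A}$ in place of $A$ yields $\Du_{\op{A}}\colon \op{(\Dper(\op{A}))} \to \Dper(A)$.

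To show these are mutually quasi-inverse I would construct the usual biduality morphism. For any homotopically projective resolution $pM \to M$, evaluation gives a closed degree-zero morphism of $A$-modules
\[
\eta_M\colon pM \longrightarrow \Hom_{\op{A}}\bigl(\Hom_A(pM, A),\, A\bigr),
\qquad m \longmapsto (\varphi \mapsto \varphi(m)),
\]
which descends to a natural transformation $\eta\colon \id_{\Der(A)} \to \Du_{\op{A}} \circ \Du_A$. For $M = A$ the composition $\Du_{\op{A}} \Du_A(A) \simeq \RHom_{\op{A}}(A,A) \simeq A$ and an easy check shows $\eta_A$ is this isomorphism. Now consider
\[
\mathcal{T} = \{\, M \in \Dper(A) \mid \eta_M \text{ is an isomorphism} \,\}.
\]
Since $\eta$ is a morphism of triangulated functors (both $\id$ and $\Du_{\op{A}} \circ \Du_A$ are triangulated on $\Dper(A)$, the latter because we showed $\Du_A$ lands in $\Dper(\op{A})$), the five lemma for distinguished triangles shows $\mathcal{T}$ is triangulated; it is closed under direct summands by functoriality of $\eta$ together with the fact that an idempotent splits a biduality isomorphism into the sum of biduality morphisms of the summands. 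Hence $\mathcal{T}$ is a thick subcategory containing $A$, so $\mathcal{T} = \Dper(A)$.

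This proves $\Du_{\op{A}} \circ \Du_A \simeq \id$ on $\Dper(A)$, and by symmetry $\Du_A \circ \Du_{\op{A}} \simeq \id$ on $\Dper(\op{A})$. Therefore $\Du_A$ is an equivalence $\op{(\Dper(A))} \xrightarrow{\sim} \Dper(\op{A})$, and under the identification of $\op{(\op{A})}$ with $A$ the composition $\Du_A \circ \Du_A$ is precisely $\Du_{\op{A}} \circ \Du_A \simeq \id$ on perfect modules. The only delicate point is the bookkeeping with opposite algebras and the verification that $\mathcal{T}$ is closed under summands; the latter can be avoided altogether by using that in a triangulated category any thick closure of $\{A\}$ is generated by shifts, cones, and summands, and that $\eta$ respects all three operations.
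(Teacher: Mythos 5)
Your argument is correct, but there is nothing in the paper to compare it against: the paper's ``proof'' of this proposition is a bare citation of Proposition A.1 in Shklyarov's paper. What you give is a complete, self-contained proof by the standard d\'evissage to the generator: $\Du_A(A)\simeq\RHom_A(A,A)$ is $A$ viewed as a free rank-one $\op{A}$-module, hence perfect; the objects whose dual is perfect form a thick subcategory because $\Du_A$ is triangulated and $\Dper(\op{A})$ is thick; and $\Dper(A)=\langle A\rangle$ by the proposition proved at the end of Section 2, which the paper has already established. The same pattern (check on $A$, then climb through cones and summands) is exactly what the paper uses elsewhere, e.g.\ in the proof of the duality isomorphisms of Section 3, so your route is entirely consistent with the paper's toolkit and arguably more in its spirit than the external citation. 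One point you should make explicit: for your $\eta_M$ to compute the \emph{derived} double dual $\Du_{\op{A}}\circ\Du_A$, the complex $\Hom_A(pM,A)$ must itself be homotopically projective over $\op{A}$; this is not automatic for arbitrary $M\in\Der(A)$, so the natural transformation $\eta\colon\id\to\Du_{\op{A}}\circ\Du_A$ should only be asserted on $\Dper(A)$, where it holds because $\Du_A(pM)$ is perfect by the first half of your argument (perfect modules are homotopically projective by the proposition in Section 2.3). Since $\mathcal{T}$ is defined inside $\Dper(A)$ anyway, this is a matter of ordering the steps correctly rather than a gap, and with that adjustment the proof is sound.
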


\begin{proof}
See \cite[Proposition A.1]{Shklyarov1}.
\end{proof}

\begin{prop}\label{dualhom}
Suppose N is a perfect A-module and M is an arbitrary left $A \otimes \op{B}$-module, where B is another dg algebra. Then there is a natural isomorphism of $B$-modules
\begin{equation}\label{forumule dualhom}
(\RHom_A(N,M))^\ast \simeq \op{(M^\ast)} \Lte_A N 
\end{equation}
\end{prop}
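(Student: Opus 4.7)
The plan is to build a natural comparison morphism between the two sides and then check that it is an isomorphism on a generating object, using the characterization of $\Dper(A)$ as the thick envelope of $A$ (proved just above in the excerpt).

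First I would construct a canonical morphism of $\op{B}$-modules
\begin{equation*}
\alpha_{N,M} : M^\ast \Lte_A N \longrightarrow (\RHom_A(N,M))^\ast.
\end{equation*}
At the non-derived level this is essentially the pairing $(\phi \otimes n, f) \mapsto \pm\,\phi(f(n))$ for $\phi \in M^\ast$, $n \in N$, $f \in \Hom_A(N,M)$ (with the usual Koszul sign). One checks it is $A$-balanced since $f$ is $A$-linear, and $\op{B}$-linear because $\phi$ and $f$ both carry compatible $B$-actions. To lift this to the derived setting one replaces $N$ by a homotopically projective (e.g. semi-free) resolution; since $N$ is perfect, Proposition \ref{conservation proj} (or equivalently the definition of $\Dper(A)$) guarantees that tensor products and $\Hom$-complexes compute the correct derived functors, so $\alpha_{N,M}$ descends to a well-defined natural transformation in $\Dper(A)$.

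Next, regard both sides of \eqref{forumule dualhom} as functors of $N \in \Dper(A)$ with values in $\Der(\op{B})$. The functor $M^\ast \Lte_A (-)$ is triangulated and commutes with finite direct sums. The functor $(\RHom_A(-,M))^\ast$ is the composition of the contravariant triangulated functor $\RHom_A(-,M)$ with the contravariant exact functor $(-)^\ast = \Hom_k(-,k)$, hence also triangulated and compatible with finite direct sums. Consequently, the full subcategory
\begin{equation*}
\mathcal{E} = \{\, N \in \Dper(A) \mid \alpha_{N,M} \text{ is an isomorphism} \,\}
\end{equation*}
is a thick subcategory of $\Dper(A)$: it is stable under shifts, cones (by the five lemma applied to the induced morphism of distinguished triangles) and direct summands (a retract of an isomorphism is an isomorphism).

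It then suffices to check the case $N = A$. On the left,
\begin{equation*}
M^\ast \Lte_A A \simeq M^\ast,
\end{equation*}
and on the right,
\begin{equation*}
(\RHom_A(A,M))^\ast \simeq M^\ast,
\end{equation*}
and $\alpha_{A,M}$ is directly identified with the identity under these isomorphisms. Since the thick envelope of $A$ is all of $\Dper(A)$ by the last proposition of the excerpt, we conclude that $\mathcal{E} = \Dper(A)$, which yields \eqref{forumule dualhom} for every perfect $N$.

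The main technical obstacle is the book-keeping of the bimodule structures and signs: one must verify that $\alpha_{N,M}$ is genuinely $\op{B}$-linear and respects the dg structure (closedness and naturality in $N$), and that the dualization functor $(-)^\ast$ is exact on the relevant subcategory so that cones are sent to cones. Once these verifications are in place the thick-envelope argument is automatic.
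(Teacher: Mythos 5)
The paper states Proposition \ref{dualhom} without proof, so there is nothing to compare against directly; your argument is correct and is exactly the dévissage the author uses explicitly for the neighbouring lemma establishing (\ref{dual anneau}) and (\ref{calcul inter}) (define a chain-level natural transformation, check it on the free generator, induct over cones, pass to direct summands), repackaged via the characterization of $\Dper(A)$ as the thick envelope $\langle A \rangle$. The only points worth spelling out in a final write-up are the ones you already flag: the comparison map must go in the direction $M^\ast \Lte_A N \to (\RHom_A(N,M))^\ast$ (the reverse does not exist without finiteness), exactness of $(-)^\ast=\Hom_k(-,k)$ uses that $k$ is a field, and since perfect modules are homotopically projective the underived $\Hom$ and $\otimes$ already compute the derived functors, so $\alpha_{N,M}$ is defined at the level of complexes and naturality in $N$ is automatic.
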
 


\begin{thm} \label{isoSerre}
In $\Dper(A)$, the endofunctor $\RHom_A(\cdot,A)^{\ast}$ is isomorphic to the endofunctor $(\op{A})^\ast \Lte_{A} -$.       
\end{thm}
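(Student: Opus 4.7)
The plan is to apply Proposition \ref{dualhom} directly, taking $M = A$ viewed as an $A \otimes \op{A}$-module (i.e., choosing the auxiliary algebra in Proposition \ref{dualhom} to be $B = \op{A}$). Since by hypothesis $N \in \Dper(A)$, the assumption of that proposition is satisfied, and it yields a natural isomorphism
\begin{equation*}
\RHom_A(N,A)^{\ast} \simeq A^{\ast} \Lte_{A} N
\end{equation*}
of $\op{B}$-modules, i.e., of left $A$-modules (since $\op{(\op{A})} = A$).

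The first step is therefore to set up the two functors on the same category: the target of $\RHom_A(\cdot, A)$ is $\Der(\op{A})$, and the $k$-dual $(-)^{\ast}$ sends this into $\Der(A)$, so $\RHom_A(\cdot, A)^{\ast}$ is a covariant functor from $\Dper(A)$ to $\Der(A)$. On the other side, $A^{\ast}$ is, via the prescribed action $x \cdot a \otimes b = (-1)^{|b|(|a|+|x|)} b x a$, a left $A \otimes \op{A}$-module, so its right $A$-action allows one to form $A^{\ast} \Lte_A N$, which retains a residual left $A$-action coming from the first tensor factor. Thus both functors land in $\Der(A)$.

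The second step is to check that the isomorphism produced by Proposition \ref{dualhom} respects these $A$-module structures. Concretely, the $\op{B} = A$-action on $M^{\ast} \Lte_A N$ given in Proposition \ref{dualhom} is induced from the left $B$-action on $M^{\ast} = \Hom_k(M,k)$, which is inherited from the right $B = \op{A}$-action on $M = A$; and this is exactly the action described before the theorem. The third step is to record that the isomorphism is natural in $N$, which follows from the functoriality statement in Proposition \ref{dualhom}, upgrading the pointwise isomorphism to an isomorphism of functors.

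The main obstacle, such as it is, lies entirely in verifying the compatibility of the two $A$-module structures in the presence of the Koszul signs: one has to unfold the right $A^e$-structure on $A$, pass to the $k$-dual to get a left $A \otimes \op{A}$-structure on $A^{\ast}$, and then check that the residual left $A$-action on $A^{\ast} \Lte_A N$ coincides with the $\op{B}$-action delivered by Proposition \ref{dualhom}. This is a routine sign-bookkeeping calculation rather than a substantive difficulty; no further homological input is required beyond Proposition \ref{dualhom}.
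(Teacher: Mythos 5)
Your proposal is correct and matches the paper's own argument exactly: the theorem is stated there as a direct corollary of Proposition \ref{dualhom} with $M=A$ and $B=\op{A}$. The additional sign and module-structure bookkeeping you describe is a reasonable elaboration of what the paper leaves implicit.
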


\begin{proof}
This result is a direct corollary of Proposition \ref{dualhom} by choosing $M=A$ and $B=A$.
\end{proof}

Lemma \ref{intercontra} and Theorem \ref{contraction} are probably well known results. Since we do not know any references for them, we shall give detailed proofs.

\begin{lemme}\label{intercontra}
Let B be a proper dg algebra, $M \in \Der_{\mathrm{perf}}(A \otimes \op{B})$ and $N \in \Der_{\mathrm{perf}}(B \otimes \op{C})$. There are the following canonical isomorphisms respectively in $\Dper(\op{B} \otimes C)$ and $\Dper(\op{A} \otimes C)$:

\begin{equation}\label{dual anneau}
B^{\ast} \Lte_{\op{B}} \RHom_{B \otimes \op{C}}(N,B \otimes \op{C})  \simeq \RHom_{\op{C}}(N,\op{C})
\end{equation}

\begin{align}\label{calcul inter}
\hspace{1.25cm}\RHom_{A \otimes \op{B}}(M,A\otimes  \RHom_{\op{C}}(N, \op{C}))&  \notag \\
  &\hspace{-2.5cm}\simeq\RHom_{A \otimes \op{C}}(M \Lte_{B} N,A \otimes \op{C}).
\end{align}
\end{lemme}

\begin{proof}

\textit{(i)} Let us prove formula (\ref{dual anneau}). Let $N \in \mathcal{C}(B \otimes \op{C})$. There is a morphism of $\op{B} \otimes C$ modules
\begin{equation*}
\Psi_N: B^{\ast} \otimes_{\op{B}} \Hom_{B \otimes \op{C}}^\bullet(N, B \otimes \op{C}) \to \Hom_{\op{C}}^\bullet(N, \op{C})
\end{equation*} 
such that $\Psi_N(\delta \otimes_{\op{B}} \phi)= m \circ (\delta \otimes \id_{\op{C}}) \circ \phi$ where $m: k \otimes \op{C} \to \op{C}$ and $m(\lambda \otimes c)=\lambda \cdot c$. Clearly, $\Psi$ is a natural transformation between the functor $ B^{\ast} \otimes_{\op{B}} \Hom_{B \otimes \op{C}}^\bullet(\cdot, B \otimes \op{C})$ and $ \Hom_{\op{C}}^\bullet(\cdot, \op{C})$. For short, we set
\begin{align*}
F(X)= B^{\ast} \Lte_{\op{B}} \RHom_{B \otimes \op{C}}(X,B \otimes \op{C})& \hspace{0.5cm} \mathrm{and}  & G(X)=\RHom_{\op{C}}(X,\op{C}).
\end{align*}
If $X$ is a direct factor of a finitely generated semi-free $B \otimes \op{C}$-module, we obtain that  $\RHom_{B \otimes \op{C}}(X,B \otimes \op{C}) \simeq \Hom_{B \otimes \op{C}}^\bullet(X,B \otimes \op{C})$ and the $\op{B} \otimes C$-module $\Hom_{B \otimes \op{C}}^\bullet(X,B \otimes \op{C})$ is flat over $\op{B}$ since it is flat over $\op{B} \otimes C$. By Lemma 3.4.2 of \cite{Hinch} we can use flat replacements instead of cofibrant one to compute derived tensor products. Thus $F(X)\simeq B^{\ast} \otimes_{\op{B}} \Hom_{B \otimes \op{C}}(X,B \otimes \op{C})$. 

Since $B \otimes \op{C}$ is a cofibrant $\op{C}$-module, it follows that the forgetful functor from $\cC(B \otimes \op{C})$ to $\cC(\op{C})$ preserves cofibrations. Thus, X is a cofibrant $\op{C}$- module. It follows that $G(X)\simeq\Hom_{\op{C}}(X,\op{C})$. Therefore, $\Psi$ induces a natural transformation from $F$ to $G$ when they are restricted to $\Dper(B \otimes \op{C})$.

Assume that $X=B \otimes \op{C}$. Then we have the following commutative diagram
\begin{equation*}
\xymatrix{B^{\ast} \otimes_{\op{B}} \Hom_{B \otimes \op{C}}^\bullet(B \otimes \op{C},B \otimes \op{C})  \ar[r]^-{\Psi_{B \otimes \op{C}}} \ar[d]^-{\wr} & \Hom_{\op{C}}^\bullet(B \otimes \op{C}, \op{C}) \ar[d]^-{\wr}\\
B^\ast \otimes_{\op{B}} \op{B} \otimes C  \ar[d]^-{\wr}& \Hom_k^\bullet(B, \Hom_{\op{C}}^\bullet(\op{C},\op{C})\ar[d]^-{\wr}\\
B^{\ast} \otimes C \ar[r]^-{\sim}& \Hom_k^\bullet(B,C)}
\end{equation*}
which proves that $\Psi_{B \otimes \op{C}}$ is an isomorphism. The bottom map of the diagram is an isomorphism because $B$ is proper. Hence, by Proposition \ref{isoext}, $\Psi_X$ is an isomorphism for any $X$ in $\Dper(B \otimes \op{C})$ which proves the claim.

\textit{(ii)} Let us prove formula (\ref{calcul inter}). We first notice that there is a morphism of $A \otimes \op{C}$-modules functorial in $M$ and $N$
\begin{equation*}
\Theta:\Hom_{A \otimes \op{B}}^\bullet(M, A \otimes \Hom_{\op{C}}^\bullet(N, \op{C})) \to \Hom_{A \otimes \op{C}}^\bullet(M \otimes_{B} N,A \otimes \op{C})
\end{equation*}
defined by $\psi \mapsto (\Psi: m\otimes n \mapsto \psi(m)(n))$.\\
If $M=A \otimes \op{B}$ and $N= B \otimes \op{C}$, then it induces an isomorphism. By applying an argument similar to the previous one, we are able to establish the isomorphism (\ref{calcul inter}).
\end{proof}

The next relative duality theorem can be compared to \cite[Thm 3.3.3]{KS3} in the framework of DQ-modules though the proof is completely different.

\begin{thm}\label{contraction}
Assume that B is proper. Let $M \in \Der_{\mathrm{perf}}(A \otimes \op{B})$ and $N \in \Der_{\mathrm{perf}}(B \otimes \op{C})$. There is a natural isomorphism in $\Der_{\mathrm{perf}}(\op{A} \otimes C)$
\begin{equation*}
\Du^{\prime}_{A \otimes \op{B}} (M) \Lte_{\op{B}} B^{\ast} \Lte_{\op{B}}  \Du^{\prime}_{B \otimes \op{C}}(N) \simeq \Du^{\prime}_{A \otimes \op{C}}( M \Lte_{B} N).
\end{equation*}
\end{thm}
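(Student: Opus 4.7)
The plan is to start from the right-hand side $\Du_{A \otimes \op{C}}(M \Lte_B N)$ and transform it into the left-hand side by successively invoking the two formulas of the preceding lemma, together with the tensor-hom isomorphism for perfect modules. Observe that formula (\ref{calcul inter}) of the lemma already gives a canonical isomorphism
\begin{equation*}
\Du_{A \otimes \op{C}}(M \Lte_B N) \simeq \RHom_{A \otimes \op{B}}(M, A \otimes \RHom_{\op{C}}(N, \op{C})),
\end{equation*}
so the first job is to separate $M$ from $N$ in the outer $\RHom$, and the second is to rewrite $\RHom_{\op{C}}(N, \op{C})$ in terms of $\Du_{B \otimes \op{C}}(N)$.

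For the second point I would invoke formula (\ref{dual anneau}), which reads $\Du_{B \otimes \op{C}}(N) \Lte_B B^{\ast} \simeq \RHom_{\op{C}}(N, \op{C})$; under the canonical identification of right $B$-modules with left $\op{B}$-modules, and the corresponding identification of $B^{\ast}$ with $(\op{B})^{\ast}$ as a bimodule with sides swapped, this is the same as
\begin{equation*}
(\op{B})^{\ast} \Lte_{\op{B}} \Du_{B \otimes \op{C}}(N) \simeq \RHom_{\op{C}}(N, \op{C}).
\end{equation*}
For the first point I would establish, by a dévissage completely analogous to the ones used in the proof of the preceding lemma and of Proposition \ref{conservation proj}, the standard fact that for any $A \otimes \op{B}$-module $P$, the natural map
\begin{equation*}
\Du_{A \otimes \op{B}}(M) \Lte_{A \otimes \op{B}} P \longrightarrow \RHom_{A \otimes \op{B}}(M, P)
\end{equation*}
is an isomorphism when $M$ is perfect: check it for $M = A \otimes \op{B}$, then extend by cones and homotopy direct summands, using that both functors of $M$ are triangulated and send direct summands to direct summands.

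Putting these ingredients together, I apply the tensor-hom isomorphism with $P = A \otimes ((\op{B})^{\ast} \Lte_{\op{B}} \Du_{B \otimes \op{C}}(N))$, and then use the obvious reduction $X \Lte_{A \otimes \op{B}} (A \otimes Y) \simeq X \Lte_{\op{B}} Y$ (which simply absorbs the free $A$-factor against the right $A$-action carried by $\Du_{A\otimes\op{B}}(M)$) to conclude that
\begin{equation*}
\Du_{A \otimes \op{C}}(M \Lte_B N) \simeq \Du_{A \otimes \op{B}}(M) \Lte_{\op{B}} (\op{B})^{\ast} \Lte_{\op{B}} \Du_{B \otimes \op{C}}(N).
\end{equation*}

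The main obstacle is not any single deep step but a careful bookkeeping of bimodule structures: the lemma is stated in terms of tensoring over $B$ with $B^{\ast}$, whereas the theorem is phrased in terms of tensoring over $\op{B}$ with $(\op{B})^{\ast}$, and one must check that all the chosen left/right actions agree with the ones appearing on $\Du_{A\otimes\op{B}}(M)$ and $\Du_{B\otimes\op{C}}(N)$ viewed respectively in $\Der(\op{A} \otimes B)$ and $\Der(\op{B} \otimes C)$, so that every tensor product in the chain above is well-defined and the final object lives in $\Dper(\op{A} \otimes C)$ as required. Secondarily, one needs to verify the tensor-hom statement for perfect $M$, which is not stated explicitly in the excerpt but follows by the same dévissage technique used throughout Section 2.
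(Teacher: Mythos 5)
Your proposal is correct and uses exactly the same three ingredients as the paper's proof --- formula (\ref{calcul inter}), formula (\ref{dual anneau}) with the $B$/$\op{B}$ side-swap, and the tensor--hom isomorphism $\Du_{A\otimes\op{B}}(M)\Lte_{A\otimes\op{B}}P\simeq\RHom_{A\otimes\op{B}}(M,P)$ for perfect $M$ --- merely traversed from the right-hand side to the left rather than the reverse. The paper leaves the tensor--hom step implicit, whereas you correctly flag that it needs the same d\'evissage argument; otherwise the two arguments coincide.
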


\begin{proof}
We have
\begin{align*}
\Du^{\prime}(M) \Lte_{B^{\opp}} B^\ast \Lte_{B^{\opp}}  \Du^{\prime}(N)& \\
&\hspace{-2.5cm} \simeq\RHom_{A \otimes \op{B}}(M,A\otimes B^{\opp}) \Lte_{B^{\opp}} B^\ast \Lte_{B^{\opp}} \RHom_{B \otimes \op{C}}(N, B \otimes \op{C})\\
                                                 &\hspace{-2.5cm}\simeq \RHom_{A \otimes \op{B}} (M,A \otimes \op{B}) \Lte_{\op{B}} \RHom_{\op{C}}(N, \op{C})\\
                                                 &\hspace{-2.5cm}\simeq \RHom_{A \otimes \op{B}}(M,A\otimes  \RHom_{\op{C}}(N, \op{C}))\\
                                                  &\hspace{-2.5cm}\simeq \RHom_{A \otimes \op{C}}(M \Lte_{B} N,A \otimes \op{C}).
\end{align*}
\end{proof}

One has (see for instance \cite{Gin})

\begin{thm}
Let A be a proper homologically smooth dg algebra. The functor  $N \mapsto (\op{A})^{\ast} \Lte_A N, \Dper(A) \to \Dper(A)$ is a Serre functor.
\end{thm}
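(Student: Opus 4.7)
My plan is to reduce both assertions to machinery already established in the paper, and to treat the ``moreover'' clause first since the main Serre duality argument makes essential use of it.

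The identification $S_A(N) \simeq \RHom_A(N,A)^{\ast}$ is precisely the content of the preceding theorem applied pointwise. From this formula I would deduce that $S_A$ preserves $\Dper(A)$: Proposition \ref{dual equi} places $\RHom_A(N,A)$ in $\Dper(\op{A})$, which has finite-dimensional total cohomology by Theorem \ref{dual vect}(i) applied to $\op{A}$ (proper whenever $A$ is). The $k$-dual inherits this finiteness, and Theorem \ref{dual vect}(ii) combined with homological smoothness of $A$ then forces $\RHom_A(N,A)^{\ast} \in \Dper(A)$.

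For the Serre duality isomorphism, I would run, for $X, Y \in \Dper(A)$, the chain
\begin{equation*}
\begin{split}
\RHom_A(X, S_A(Y)) &\simeq \RHom_A(X, \RHom_A(Y,A)^{\ast}) \\
 &\simeq (\RHom_A(Y,A) \Lte_A X)^{\ast} \\
 &\simeq \RHom_A(Y, X)^{\ast}.
\end{split}
\end{equation*}
The middle isomorphism is the derived $k$-duality adjunction $\RHom_A(X, \Hom_k(M, k)) \simeq \Hom_k(M \Lte_A X, k)$ for a right $A$-module $M$. The last step uses the identity $\RHom_A(Y, A) \Lte_A X \simeq \RHom_A(Y, X)$ for $Y \in \Dper(A)$, which is tautological at $Y = A$ and propagates along triangles and direct summands via the thick envelope description $\Dper(A) = \langle A \rangle$. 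Taking $\Hn^0$ and invoking the Ext-finiteness of Lemma \ref{Ext finite} then produces the desired functorial isomorphism.

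The main obstacle is verifying that $S_A$ is an autoequivalence rather than merely a representing endofunctor. Full faithfulness is a formal consequence of the duality formula: the representability $\Hom(-, S_A(Y)) \simeq \Hom(Y, -)^{\ast}$, together with double duality for finite-dimensional Hom-spaces, pins $S_A$ down as a fully faithful endofunctor via Yoneda. For essential surjectivity I would exploit the fact that $A^e$ is itself proper and homologically smooth---by the earlier proposition on tensor products---so Theorem \ref{dual vect}(ii) ensures $A^{\ast} \in \Dper(A^e)$, and a standard argument for smooth proper dg algebras then shows that $A^{\ast}$ is an invertible $A$-bimodule whose inverse provides a quasi-inverse to $S_A$. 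For the details of this last step I would appeal to \cite{Gin}.
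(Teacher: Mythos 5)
Your argument is correct, and its skeleton is the same as the paper's: identify $S_A$ with $\RHom_A(\cdot,A)^{\ast}$, invoke Lemma \ref{Ext finite} for Ext-finiteness, and extract both the equivalence property and the duality isomorphism from the interplay of $\Du_A$ and $(\cdot)^{\ast}$. You diverge at two points. For the duality isomorphism, the paper simply specializes Theorem \ref{contraction} to $A=C=k$, $B=A$, $M=\RHom_A(N,A)$, whereas you re-derive it by the chain $\RHom_A(X,\RHom_A(Y,A)^{\ast})\simeq(\RHom_A(Y,A)\Lte_A X)^{\ast}\simeq\RHom_A(Y,X)^{\ast}$; this is a legitimate self-contained unwinding of the same tensor-hom/$k$-duality adjunction (Proposition \ref{dualhom}) that powers Theorem \ref{contraction}, at the cost of reproving a special case of a result already available. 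For the autoequivalence property, the paper's route is shorter: $\RHom_A(\cdot,A)^{\ast}$ is the composite of the anti-equivalence $\Du_A\colon \op{(\Dper(A))}\to\Dper(\op{A})$ of Proposition \ref{dual equi} with the anti-equivalence $(\cdot)^{\ast}$ on perfect modules (Theorem \ref{dual vect} plus double $k$-duality for complexes with finite-dimensional total cohomology --- essentially the preservation argument you already wrote out), so no Yoneda full-faithfulness argument and no invertibility of the bimodule $A^{\ast}$ are needed. Your appeal to \cite{Gin} for that invertibility is admissible as an external citation, but note that within the paper's own logical order this fact is a \emph{consequence} of the present theorem (it is the subsequent statement that $\w_A^{-1}\Lte_A-$ and $S_A$ are quasi-inverse, whose proof begins by using that $S_A$ is an autoequivalence), so the composite-of-anti-equivalences argument is the one to prefer if you want the proof to be internal.
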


\begin{proof}
According to Lemma \ref{Ext finite}, $\Der_{\mathrm{perf}}(A)$ is an Ext-finite category. By Theorem \ref{isoSerre}, the functor $(\op{A})^\ast \Lte_A -$ is isomorphic to the functor $\RHom_A(\cdot,A)^\ast$. Moreover using Theorem \ref{dual vect} and Proposition \ref{dual equi} one sees that $\RHom_A(\cdot,A)^{\ast}$ is an equivalence on $\Dper(A)$ and so is the functor $(\op{A})^{\ast} \Lte_A \cdot$.
By applying Theorem \ref{contraction} with $A=C=k$, $B=\op{A}$, $N=M$ and $M=\RHom_A(N,A)$ one obtains
\begin{equation*}
\RHom_A(N, (\op{A})^{\ast} \Lte_A M) \simeq \RHom_A(M,N)^\ast.
\end{equation*} %
\end{proof}%

\begin{defi}
One sets $S_A:\Dper(A) \to \Dper(A)$, $N \mapsto (\op{A})^\ast \Lte_A N$ for the Serre functor of $\Dper(A)$. 
\end{defi}

The Serre functor can also be expressed in term of dualizing objects. They are defined by \cite{KoSY}, \cite{VdBduality}, \cite{Gin}. Related results can also be found in \cite{jorg}. One sets:
\begin{equation*} 
 \omega_{A}^{-1}:=\RHom_{^{e} \! A}(\op{A}, \eA)=\Du^{\prime}_{\eA}(\op{A}),
\end{equation*} 
\begin{equation*} 
 \w_A:=\RHom_{A}(\w_A^{-1},A)= \Du_A^\prime(\w^{-1}_A).
\end{equation*}
The structure of $A^e$-module of $\w_A^{-1}$ is clear. The object $\w_A$ inherits a structure of $\op{A}$-module from the structure of $\op{A}$-module of $A$ and a structure of $A$-module from the structure of $\op{A}$-module of $\w_A^{-1}$. This endows $\w_A$ with a structure of $A^e$-modules.

Since $A$ is a smooth dg algebra, it is a perfect $A^e$-module. Proposition \ref{dual equi} ensures that $\w_A^{-1}$ is a perfect $A^e$-module. Finally, Proposition \ref{dual vect} shows that $\w_A$ is a perfect $A^e$-module. 

\begin{prop}
The functor $\omega_A^{-1} \Lte_{A} -$ is left adjoint to the functor $\w_A \Lte_{A} -$.
\end{prop}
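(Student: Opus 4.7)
The plan is to reduce the claim to the standard derived tensor--hom adjunction followed by a dévissage. By the tensor--hom adjunction applied to the $A$-bimodule $\omega_A^{-1}$, we have a functorial isomorphism
$$
\Hom_{\Der(A)}(\omega_A^{-1} \Lte_A M, N) \simeq \Hom_{\Der(A)}(M, \RHom_A(\omega_A^{-1}, N)),
$$
where the right $A$-structure of $\omega_A^{-1}$ provides the left $A$-structure on $\RHom_A(\omega_A^{-1}, N)$. Hence the claim reduces to producing a natural isomorphism
$$
\omega_A \Lte_A N \simeq \RHom_A(\omega_A^{-1}, N) \quad \text{in } \Der(A).
$$

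The next step is the following general fact: for every perfect left $A$-module $P$ and every $N \in \Der(A)$, the canonical morphism
$$
\RHom_A(P, A) \Lte_A N \longrightarrow \RHom_A(P, N),
$$
given on representatives by $\varphi \otimes n \mapsto (p \mapsto \varphi(p) \cdot n)$, is an isomorphism. For $P = A$ both sides reduce tautologically to $N$. Since both constructions are triangulated and commute with shifts in $P$, and since they preserve direct summands, the last proposition of Section 2 (the identity $\Dper(A) = \langle A \rangle$) together with the five lemma extends the isomorphism from $P = A$ to every $P \in \Dper(A)$. This is exactly the same inductive template already used in the paper, e.g.\ in the proof of Proposition \ref{conservation proj} or in the proof of formula \eqref{dual anneau}.

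It then remains to verify that $\omega_A^{-1}$ lies in $\Dper(A)$ so that the lemma applies with $P = \omega_A^{-1}$, giving
$$
\omega_A \Lte_A N = \RHom_A(\omega_A^{-1}, A) \Lte_A N \simeq \RHom_A(\omega_A^{-1}, N),
$$
which, combined with the tensor--hom isomorphism, yields the adjunction. Perfectness of $\omega_A^{-1}$ over $A$ follows from the standing hypotheses: $A^e$ is proper (as the tensor product of two proper dg algebras) and $\omega_A^{-1}$ is perfect over $A^e$ by construction, so Theorem \ref{dual vect}\,(i) applied to $A^e$ gives $\sum_n \dim \Hn^n(\omega_A^{-1}) < \infty$; since $A$ is also homologically smooth, Theorem \ref{dual vect}\,(ii) then yields $\omega_A^{-1} \in \Dper(A)$.

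The main technical annoyance will be the bookkeeping of the left versus right $A$-structures on the bimodules $\omega_A$ and $\omega_A^{-1}$, so that all three isomorphisms above assemble into an adjunction on the category $\Der(A)$ of left $A$-modules rather than on one of its twisted variants; no deeper idea is involved beyond the combination of tensor--hom adjunction with the dévissage that produces $\omega_A$ from $\omega_A^{-1}$.
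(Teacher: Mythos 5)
Your proof is correct and takes essentially the same route as the paper's: the paper's two-line argument is exactly the chain $\Hom_A(M,\omega_A \Lte_A N)\simeq \Hom_A(M,\RHom_A(\omega_A^{-1},N))\simeq \Hom_A(\omega_A^{-1}\Lte_A M,N)$, whose first isomorphism you justify carefully (perfectness of $\omega_A^{-1}$ over $A$ via Theorem \ref{dual vect}, then d\'evissage for $\RHom_A(P,A)\Lte_A N\to\RHom_A(P,N)$) where the paper leaves it implicit.
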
%

One also has, \cite{Gin}
\begin{thm}
The two functors $\omega_A^{-1} \Lte_A -$ and $S_A$ from $\Der_{\mathrm{perf}}(A)$ to $\Der_{\mathrm{perf}}(A)$  are quasi-inverse.
\end{thm}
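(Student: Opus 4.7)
The plan is to identify $\omega_A \Lte_A -$ with the Serre functor $S_A$ and then invoke uniqueness of adjoints. The preceding proposition provides the adjunction $\omega_A^{-1} \Lte_A - \dashv \omega_A \Lte_A -$, and the preceding theorem shows $S_A = A^{\ast} \Lte_A -$ is an autoequivalence of $\Dper(A)$. Once $\omega_A \Lte_A - \simeq S_A$ is established, the left adjoint of an autoequivalence is automatically its quasi-inverse, so $\omega_A^{-1} \Lte_A - \simeq S_A^{-1}$, which is precisely the claim.

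To reduce the functorial identification to a bimodule identification, observe that $\omega_A^{-1} = \RHom_{A^e}(\op{A}, A^e)$ is perfect over $A^e$ by smoothness, and is perfect over $A$ via the forgetful functor $\Mod(A^e) \to \Mod(A)$, since $A \otimes \op{A}$ is perfect over $A$ by properness (so Proposition \ref{cons} applies). The standard projection formula for perfect modules then gives, for $N \in \Dper(A)$,
\begin{equation*}
\omega_A \Lte_A N \;=\; \RHom_A(\omega_A^{-1}, A) \Lte_A N \;\simeq\; \RHom_A(\omega_A^{-1}, N),
\end{equation*}
so by a Yoneda-type argument, the identification $\omega_A \Lte_A - \simeq S_A = A^{\ast} \Lte_A -$ of functors on $\Dper(A)$ is equivalent to the bimodule isomorphism $\omega_A \simeq A^{\ast}$ in $\Der(A^e)$.

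The principal task is therefore to prove $\omega_A \simeq A^{\ast}$ in $\Der(A^e)$. I would approach this via two applications of Proposition \ref{dualhom}. First, applied with $N = \omega_A^{-1}$ (perfect over $A$), $M = A$ viewed as an $A \otimes \op{A}$-module, and $B = \op{A}$, it yields $\omega_A^{\ast} \simeq A^{\ast} \Lte_A \omega_A^{-1}$ in $\Der(A^e)$. A companion application to the perfect $A^e$-module $\op{A}$, combined with double duality $(-)^{\ast\ast} \simeq \id$ (valid on perfect objects over the proper algebras $A$ and $A^e$ by Theorem \ref{dual vect}), produces the complementary relation $\omega_A^{-1} \Lte_A A^{\ast} \simeq A$ in $\Der(A^e)$. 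Dualising the first formula and substituting the second yields $\omega_A \simeq A^{\ast}$ and completes the argument. The main obstacle is precisely this bimodule identification: the inputs are all at hand from the earlier sections, but the interlocking left/right $A$-actions on $A^e$, $\op{A}$, and their $k$-duals have to be tracked carefully for the two applications of Proposition \ref{dualhom} to produce mutually compatible $A^e$-module structures.
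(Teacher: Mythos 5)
Your scaffolding is sound: the preceding proposition gives the adjunction $\omega_A^{-1} \Lte_A - \dashv \omega_A \Lte_A -$, a left adjoint of an autoequivalence is indeed its quasi-inverse, and the projection formula reduces everything to the bimodule isomorphism $\omega_A \simeq A^{\ast}$. The gap is that your proof of that isomorphism does not go through. From your formula (a), $\omega_A^{\ast} \simeq A^{\ast} \Lte_A \omega_A^{-1}$, dualising only returns $\omega_A \simeq (A^{\ast} \Lte_A \omega_A^{-1})^{\ast} \simeq \RHom_A(\omega_A^{-1},A)$ --- Proposition \ref{dualhom} undoes itself and you recover the definition of $\omega_A$, learning nothing. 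Nor can your formula (b), $\omega_A^{-1} \Lte_A A^{\ast} \simeq A$, be ``substituted'' into (a): the two expressions involve $A^{\ast} \Lte_A \omega_A^{-1}$ and $\omega_A^{-1} \Lte_A A^{\ast}$, tensor products over $A$ in opposite orders, which are not interchangeable for bimodules. To extract $\omega_A \simeq A^{\ast}$ from (b) you would need to apply $\omega_A \Lte_A -$ and cancel $\omega_A \Lte_A \omega_A^{-1} \simeq A$, i.e.\ you would need $\omega_A^{-1}$ and $\omega_A$ to be mutually inverse bimodules --- which is essentially the statement being proved. Note also that (b) and $\omega_A \simeq A^{\ast}$ appear in the paper only \emph{after} this theorem, as consequences of it; your route inverts that order and must supply independent proofs, and the ``companion application of Proposition \ref{dualhom} to $\op{A}$'' yields $(\omega_A^{-1})^{\ast} \simeq (A^e)^{\ast} \Lte_{A^e} \op{A}$, not (b).

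The paper sidesteps all of this with a direct Hom computation: for $M,N \in \Dper(A)$,
\begin{equation*}
\RHom_A(\omega_A^{-1} \Lte_A N, M) \simeq \bigl((M^{\ast} \otimes_k N) \Lte_{A^e} \omega_A^{-1}\bigr)^{\ast} \simeq \RHom_{A^e}(A, M^{\ast} \otimes_k N)^{\ast} \simeq \Hom_A(M,N)^{\ast},
\end{equation*}
using Proposition \ref{dualhom}, the perfectness of $\omega_A^{-1}$ over $A^e$, and the biduality $\Du_{A^e}\Du_{A^e}(\op{A}) \simeq \op{A}$. Since Serre duality gives $\Hom_A(N, S_A(M)) \simeq \Hom_A(M,N)^{\ast}$ as well, this exhibits $\omega_A^{-1} \Lte_A -$ directly as a left adjoint of $S_A$, and uniqueness of adjoints concludes. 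If you want to retain your structure, replace the bimodule step by this computation, or by an honest proof that $A^{\ast}$ is a two-sided inverse of $\omega_A^{-1}$ under $\Lte_A$.
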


\begin{proof}
The functor $S_A$ is an autoequivalence. Thus, it is a right adjoint of its inverse. We prove that $\omega_A^{-1} \Lte_A -$ is a left adjoint to $S_A$. On the one hand we have for every $N,M \in \Der_{\mathrm{perf}}(A)$ the isomorphism
\begin{equation*}
\Hom_{\Der_{\mathrm{perf}}(A)}(N,S_A(M)) \simeq (\Hom_{\Der_{\mathrm{perf}}(A)}(M,N))^\ast.
\end{equation*}  
On the other hand we have the following natural isomorphisms
\begin{equation*}
\begin{split}
\RHom_A(\omega_A^{-1} \Lte_A N ,M) \simeq & (M^\ast \Lte_A \omega_A^{-1} \Lte_{A} N)^\ast\\
                                 \simeq & ((\w_A^{-1})^{\opp} \Lte_{A^e} (N \otimes M^\ast)  )^\ast\\
                                 \simeq & \RHom_{A^e}(\RHom_{\eA}(\omega_{\op{A}}^{-1},\eA), (N \otimes M^\ast))^\ast\\
                                 \simeq & \RHom_{A^e}(A,(N \otimes M^\ast ))^\ast.\\                               
\end{split}
\end{equation*}
Using the isomorphism
\begin{equation*}
\Hom_{\Der_{\mathrm{perf}}(A^{e})}(A, N \otimes M^\ast) \simeq \Hom_{\Der_{\mathrm{perf}}(A)}(M,N),
\end{equation*}
we obtain the desired result.
\end{proof}

\begin{cor}
The functors $\w_A^{-1}\Lte_A \cdot : \Dper(A) \to \Dper(A)$ and $\w_A \Lte_A \cdot : \Dper(A) \to \Dper(A)$ are equivalences of categories.
\end{cor}

\begin{cor}
The natural morphisms in $\Dper(A^e)$
\begin{align*}
A \to & \RHom_{A}(\w_A^{-1},\w_A^{-1})\\
A  \to & \RHom_{A}(\w_A,\w_A)
\end{align*}
are isomorphisms.
\end{cor}

\begin{proof}
The functor $\w_A^{-1}\Lte_A \cdot$ induces a morphism in $\Der(A^e)$ 

\begin{equation*}
A\simeq\RHom_A(A,A) \to \RHom_{A}(\w_A^{-1}\Lte_A A,\w_A^{-1} \Lte_A A)\simeq \RHom_{A}(\w_A^{-1},\w_A^{-1}).
\end{equation*}

Since $\w_A^{-1}\Lte_A \cdot$ is an equivalence of category, for every $i \in \Z$
\begin{equation*}
\Hom_A(A,A[i]) \stackrel{\sim}{\to} \Hom_{A}(\w_A^{-1},\w_A^{-1}[i]).
\end{equation*}
The results follows immediately. The proof is similar for the second morphism.
\end{proof}

\begin{prop}\label{invinitial}
Let A be a proper homologically smooth dg algebra. We have the isomorphisms of $A^e$-modules
\begin{equation*}
\w_A \Lte_A \omega_A^{-1} \simeq A, \qquad \omega_A^{-1} \Lte_A \w_A \simeq A.
\end{equation*}
\end{prop}

\begin{proof}
We have
\begin{align*}
\w_A \Lte_A \w_A^{-1} &\simeq \RHom_A(\w_A^{-1},A) \Lte_A \w_A^{-1}\\
&\simeq \RHom_A(\w_A^{-1},\w_A^{-1})\\
&\simeq A.
\end{align*}
For the second isomorphism, we remark that

\begin{align*}
\RHom_A(\w_A,A) \Lte_A \w_A &\simeq \RHom_A(\w_A,\w_A)\\
&\simeq A,
\end{align*}%
and
\begin{align*}
\RHom_A(\w_A,A) &\simeq \RHom_A(\w_A,A) \Lte_A \w_A \Lte_A \w_A^{-1}\\ 
&\simeq \w_A^{-1}\\
\end{align*}%
which conclude the proof.
\end{proof} 

\begin{cor}
Let A be a proper homologically smooth dg algebra. The two objects $(\op{A})^\ast$ and $\omega_A$ of $\Der_{\mathrm{perf}}(A^e)$ are isomorphic. 
\end{cor}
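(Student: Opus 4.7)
The proof should be essentially a formal consequence of the two preceding theorems together with the uniqueness of adjoints. Here is the plan.

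The previous theorem shows that $S_A$ and $\omega_A^{-1}\Lte_A -$ are quasi-inverse autoequivalences of $\Dper(A)$. In particular, $S_A$ is a right adjoint of $\omega_A^{-1}\Lte_A -$. On the other hand, the proposition immediately preceding that theorem shows that $\omega_A \Lte_A -$ is a right adjoint of $\omega_A^{-1}\Lte_A -$. By uniqueness of right adjoints (up to unique natural isomorphism), one obtains a natural isomorphism of endofunctors of $\Dper(A)$
\begin{equation*}
\omega_A \Lte_A - \;\simeq\; S_A \;=\; A^{\ast} \Lte_A -.
\end{equation*}
Evaluating on $N=A \in \Dper(A)$ and using the unit isomorphisms $\omega_A\Lte_A A \simeq \omega_A$ and $A^{\ast}\Lte_A A \simeq A^{\ast}$ gives an isomorphism $\omega_A \simeq A^{\ast}$ in $\Dper(A)$.

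To obtain this isomorphism in $\Dper(A^e)$ rather than merely in $\Dper(A)$, I would observe that both functors extend naturally to endofunctors of the bimodule category $\Dper(A^e)$ by acting on one of the two $A$-actions, the remaining action being carried along. Since $\omega_A$ and $A^{\ast}$ are perfect $A^e$-modules (as recalled just before the corollary), and since $A$ viewed as a left $A^e$-module with its canonical bimodule structure is again in $\Dper(A^e)$ (indeed $A$ is homologically smooth), evaluating the natural isomorphism above on $N = A \in \Dper(A^e)$ produces the desired isomorphism of $A^e$-modules.

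The main point of the argument is simply the uniqueness-of-adjoints step; no further calculation is required. The only delicate point is the bookkeeping for the bimodule structures, which amounts to checking that the natural transformation built from the adjunction is $A^e$-linear, and this is automatic because it is constructed entirely from $A$-linear operations on one side, leaving the other action untouched.
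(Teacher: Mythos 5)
Your proof is correct and is essentially the argument the paper intends: the paper gives no explicit proof, stating only that ``the two preceding theorems lead to the next corollary,'' and the intended deduction is exactly your uniqueness-of-adjoints step, since $S_A$ is right adjoint to $\omega_A^{-1}\Lte_A -$ (being its quasi-inverse) while $\omega_A\Lte_A -$ is also right adjoint to it by the preceding proposition, whence $A^{\ast}\Lte_A - \simeq \omega_A\Lte_A -$ and, evaluating at $A$, $A^{\ast}\simeq\omega_A$. Your remark on upgrading the isomorphism from $\Der_{\textrm{per}}(A)$ to $\Der_{\textrm{per}}(A^e)$ by carrying the second $A$-action along is the right bookkeeping, and goes slightly beyond what the paper records.
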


\begin{proof}
Applying Theorem \ref{contraction} with $A=B=C=\op{A}$, $M=N=\op{A}$, we get that $\w_A^{-1} \Lte_A (\op{A})^\ast \Lte_A \w_A^{-1} \simeq \w_A^{-1}$ in $\Dper(A^e)$. Then, the result follows from Corollary \ref{invinitial}.
\end{proof}

\begin{Rem}
Since $\w_A$ and $(\op{A})^\ast$ are isomorphic as $A^e$-modules, we will use $\w_A$ to denote both $(\op{A})^\ast$ and $\RHom_A(\w_A^{-1},A)$  considered as the dualizing complexes of the category $\Der_{\mathrm{perf}}(A)$.
\end{Rem}

The previous results allow us to build an "integration" morphism. 

\begin{prop}\label{integration}
There exists a natural "integration" morphism in $\Der_{\mathrm{perf}}(k)$
\begin{equation*}
\w_{\op{A}} \Lte_{A^e} A  \rightarrow k.
\end{equation*}
\end{prop}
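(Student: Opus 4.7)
The plan is to construct the integration morphism concretely using the identification $\w_{\op A}\simeq(\op A)^\ast$ together with an evaluation pairing.

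First, I invoke the corollary just proved (which says $A^\ast\simeq\w_A$ in $\Dper(A^e)$) applied to $\op A$ in place of $A$: this gives an isomorphism $\w_{\op A}\simeq(\op A)^\ast=\Hom_k(A,k)$ in $\Dper(\eA)$. Since $\eA\simeq\op{(A^e)}$ canonically (using $\op{(U\otimes V)}\simeq\op U\otimes\op V$), a left $\eA$-module is the same data as a right $A^e$-module, so I may view this isomorphism as one of right $A^e$-modules. With this identification I rewrite
$$\w_{\op A}\Lte_{A^e}A\;\simeq\;(\op A)^\ast\Lte_{A^e}A,$$
and compute the derived tensor product via a homotopically projective resolution of $A$ over $A^e$, which exists thanks to homological smoothness.

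Second, I exhibit the morphism. At the underived level the evaluation pairing $(\op A)^\ast\otimes_k A\to k$, $\phi\otimes a\mapsto\phi(a)$, is $A^e$-balanced: for $b\otimes c\in A^e$ one has $(\phi\cdot(b\otimes c))(a)=\phi(bac)=\phi((b\otimes c)\cdot a)$ up to Koszul signs, so the evaluation descends to a map $(\op A)^\ast\otimes_{A^e}A\to k$. Extending by zero in strictly positive homological degrees produces a chain map to $k$ from the Hochschild-style resolution model of $(\op A)^\ast\Lte_{A^e}A$; this chain map is manifestly natural in $A$ and defines the desired integration morphism in $\Der(k)$.

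Third, I verify that the source lies in $\Dper(k)$ so that the morphism really belongs to $\Dper(k)$. Since $A$ is proper, so is $A^e=A\otimes\op A$. Applying Theorem \ref{tensor} with the triple of dg algebras $(k,A^e,k)$, using $\w_{\op A}\in\Dper(\eA)=\Dper(\op{(A^e)})$ and $A\in\Dper(A^e)$ (homological smoothness), one obtains $\w_{\op A}\Lte_{A^e}A\in\Dper(k)$, as required.

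The only genuine obstacle is the bookkeeping of bimodule conventions and Koszul signs in identifying $(\op A)^\ast$ with $\w_{\op A}$ and in verifying that evaluation is $A^e$-balanced; once the conventions are pinned down, the chain-map property reduces to the tautology $\phi(bac)=\phi(bac)$.
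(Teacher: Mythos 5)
Your construction is correct, but it arrives at the map by a more explicit route than the paper does. The paper starts from the unit morphism $k \to \RHom_{A^e}(A,A)$, applies $(\cdot)^{\ast}$ together with the duality isomorphism $(\RHom_{A^e}(A,A))^{\ast}\simeq A^{\ast}\Lte_{A^e}A$ of Proposition \ref{dualhom} (which is where homological smoothness, i.e.\ perfectness of $A$ over $A^e$, enters), and then devotes the bulk of the argument to checking that $A^{\ast}$ and $(\op{A})^{\ast}$ carry the \emph{same} right $A^e$-module structure, so that the resulting arrow really has source $\w_{\op{A}}\Lte_{A^e}A$. You instead identify $\w_{\op{A}}\simeq(\op{A})^{\ast}$ via the corollary $A^{\ast}\simeq\w_A$ and build the morphism by hand as the $A^e$-balanced evaluation pairing, transported along a homotopically projective resolution of $A$; this is essentially the chain-level incarnation of the dual of the unit map, so the two constructions agree. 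Your approach buys concreteness and an explicit verification (absent from the paper) that the source lies in $\Dper(k)$ via Theorem \ref{tensor}; what it glosses over is precisely the point the paper labours: when you check balancedness you are using the right $A^e$-action on $A^{\ast}$ induced from $A$, whereas the module handed to you by the corollary is $(\op{A})^{\ast}$ with the action induced from $\op{A}$, and the coincidence of these two actions (the identity of the underlying dg $k$-modules being $A^e$-linear) is a statement that deserves the explicit sign check the paper gives rather than being filed under ``bookkeeping.'' Once that identification is made explicit, your proof is complete.
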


\begin{proof}
There is a natural morphism $k \rightarrow \RHom_{A^e}(A,A)$. Applying $(\cdot)^{\ast}$ and formula (\ref{forumule dualhom}) with $A=A^e$ and $B=k$, we obtain a morphism $A^\ast \Lte_{A^e} A  \rightarrow k$. Here, $A^\ast$ is endowed with its standard structure of right $A^e$-modules that is to say with its standard structure of left $\eA$-module. Thus, $\w_{\op{A}} \Lte_{A^e} A \simeq A^\ast \Lte_{A^e} A  \rightarrow k$.
%
\end{proof}

\begin{cor}
There exists a canonical map $\w_A \to k$ in $\Dper(k)$ induced by the morphism of Proposition \ref{integration}.
\end{cor}

\section{Hochschild homology and Hochschild classes}\label{Major}

\subsection{Hochshchild homology}\label{hochschild homology}

 In this subsection we recall the definition of Hochschild homology, (see \cite{Keller_loc}, \cite{lodcyclic_homology}) and prove that it can be expressed in terms of dualizing objects, (see \cite{CaldararuI}, \cite{caldararu}, \cite{KS3}, \cite{KoSY}).

\begin{defi}
The Hochshchild homology of a dg algebra is defined by 
\begin{equation*}
\Hoc(A):= \op{A} \Lte_{A^e} A.
\end{equation*}
The Hochschild homology groups are defined by $\HH_n(A)=\Hn^{-n}(\op{A} \Lte_{A^e} A)$.
\end{defi}

\begin{prop}\label{homisation}
If A is a proper and homologically smooth dg algebra then there is a natural isomorphism

\begin{equation} \label{dual description}
\Hoc(A) \simeq \RHom_{A^e}(\w_A^{-1},A).
\end{equation}
\end{prop}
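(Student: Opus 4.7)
The plan is to reduce this to a standard biduality formula for perfect modules over $A^e$. Since $A$ is homologically smooth, $A \in \Dper(A^e)$; via the flip $A^e = A\otimes\op{A} \simeq \op{A}\otimes A = \eA$, this says $\op{A}$ is perfect as a right $A^e$-module. Applying Proposition \ref{dual equi} (to the algebra $\eA$) we get that $\w_A^{-1} = \RHom_{A^e}(\op{A}, A^e)$ is a perfect left $A^e$-module and that biduality holds:
\begin{equation*}
\RHom_{A^e}(\w_A^{-1}, A^e) \simeq \op{A}.
\end{equation*}

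The key intermediate statement I would prove is the following tensor--hom formula: for every perfect right $A^e$-module $P$ and every left $A^e$-module $N$, there is a natural isomorphism
\begin{equation*}
\alpha_{P,N}:\, P \Lte_{A^e} N \xrightarrow{\ \sim\ } \RHom_{A^e}\bigl(\RHom_{A^e}(P, A^e),\, N\bigr).
\end{equation*}
The morphism $\alpha_{P,N}$ is defined as the adjoint of the evaluation pairing $\RHom_{A^e}(P,A^e) \Lte_{A^e} N \to N$ coming from the right $A^e$-action on $A^e$. When $P = A^e$, both sides collapse to $N$ and $\alpha_{A^e,N}$ is the identity. Since both sides are triangulated functors of $P$, commute with shifts and direct sums, and preserve homotopy direct summands, an easy dévissage (using that $\Dper(A^e)$ is the thick envelope of $A^e$, together with the same triangle/summand argument used in Proposition \ref{conservation proj} and the proof of Theorem \ref{dual vect}) extends the isomorphism from $P=A^e$ to all perfect $P$.

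To conclude, I would apply $\alpha_{P,N}$ with $P = \op{A}$ (perfect as a right $A^e$-module) and $N = A$ to obtain
\begin{equation*}
\Hoc(A) \,=\, \op{A} \Lte_{A^e} A \,\simeq\, \RHom_{A^e}\bigl(\RHom_{A^e}(\op{A},A^e),\, A\bigr) \,=\, \RHom_{A^e}(\w_A^{-1}, A),
\end{equation*}
which is the desired isomorphism (\ref{dual description}).

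The main obstacle is bookkeeping the module-theoretic conventions: one must keep track of which $A^e$-action (left vs.\ right, original vs.\ opposite) is used at each step so that $\RHom_{A^e}(\op{A}, A^e)$ really carries a left $A^e$-module structure compatible with the $\RHom_{A^e}(-,A)$ on the right-hand side. Once these conventions are fixed consistently (using Remark \ref{notations} and the flip $A^e \simeq \eA$), the dévissage argument is formal and the isomorphism is manifestly natural in $A$, which will matter later when this identification is used to define Hochschild classes via dualizing objects.
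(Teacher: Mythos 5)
Your proof is correct and follows essentially the same route as the paper: the paper writes $\op{A}\Lte_{A^e}A \simeq (\Du_{A^e}\Du_{A^e}\op{A})\Lte_{A^e}A \simeq \Du_{A^e}(\w_A^{-1})\Lte_{A^e}A \simeq \RHom_{A^e}(\w_A^{-1},A)$, using smoothness to get $\op{A}$ perfect over $A^e$ and then biduality plus the tensor--hom isomorphism for perfect modules. Your single natural transformation $\alpha_{P,N}$, established by d\'evissage from $P=A^e$, merely packages those two formal duality steps into one, so the content is identical.
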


\begin{proof}
We have 
\begin{align*}
\op{A} \Lte_{A^e} A & \simeq (\Du^{\prime}_{A^e} \circ \Du^{\prime}_{\eA} (\op{A})) \Lte_{A^e} A \\
               & \simeq   \Du^{\prime}_{A^e}(\w_A^{-1})\Lte_{A^e} A\\
               & \simeq  \RHom_{A^e}(\w_A^{-1},A).
\end{align*}%
\end{proof}
\begin{Rem}
There is also a natural isomorphism
\begin{equation*}
\Hoc(A) \simeq \RHom_{A^e}(A,\w_A).
\end{equation*}
It is obtain by adjunction from isomorphism (\ref{dual description}).
\end{Rem}

There is the following natural isomorphism.

\begin{prop}\label{split}
Let $A$ and $B$ be a dg algebras. Let $M \in \Der_{\mathrm{perf}}(A)$ and $S \in \Der_{\mathrm{perf}}(B)$ and $N \in \Der(A)$ and  $T \in \Der(B)$ then
\begin{equation*}
\RHom_{A \otimes B}(M \otimes S, N \otimes T) \simeq \RHom_{A}(M,N) \otimes \RHom_{B}(S,T).
\end{equation*}
\end{prop}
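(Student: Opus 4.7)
The plan is to reduce to the tautological case $M=A$, $S=B$ by a two-step dévissage on the perfect modules $M$ and $S$, exactly along the lines of the arguments already used in the paper for Theorem \ref{contraction} and for the isomorphisms (\ref{dual anneau})--(\ref{calcul inter}).

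First I construct the natural comparison morphism. Since $M \in \Prf(A)$, $S \in \Prf(B)$, and moreover $M \otimes S \in \Prf(A \otimes B)$ (because $\Prf(A \otimes B)$ contains $A \otimes B$ and is stable by cones and direct summands), the earlier proposition saying that perfect modules are homotopically projective ensures that both sides of the desired isomorphism can be computed using the underived $\Hom$-complexes. Concretely, one has a natural closed degree-zero morphism of dg $k$-modules
\begin{equation*}
\Phi_{M,S} : \Hom_A(M,N) \otimes_k \Hom_B(S,T) \longrightarrow \Hom_{A \otimes B}(M \otimes S,\, N \otimes T),
\end{equation*}
sending $f \otimes g$ to $m \otimes s \mapsto (-1)^{|g||m|} f(m) \otimes g(s)$, natural in all four variables, and it is enough to prove that $\Phi_{M,S}$ is a quasi-isomorphism.

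The base case $M=A$, $S=B$ is tautological: both sides canonically identify with $N \otimes T$ and $\Phi_{A,B}$ realizes this identification. I then perform the dévissage, first in $M$ with $S=B$ fixed, and then in $S$ with $M$ an arbitrary perfect $A$-module. Both functors $\Hom_A(-,N) \otimes_k \Hom_B(B,T)$ and $\Hom_{A \otimes B}(- \otimes B,\, N \otimes T)$ send distinguished triangles of $\Hn^0(\Prf(A))$ to distinguished triangles (the second one because tensoring over $k$ with $B$ preserves cones), and $\Phi_{-,B}$ is a morphism between them. By induction on the number of cones needed to build a finitely generated semi-free $A$-module, together with the five lemma at each step, $\Phi_{M,B}$ is a quasi-isomorphism for every finitely generated semi-free $M$; passing to homotopy direct summands extends this to all $M \in \Prf(A)$. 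Repeating the same argument verbatim with $M$ fixed perfect and $S$ varying in $\Prf(B)$ completes the proof.

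The main technical point one has to watch is that $M \otimes S$ should genuinely be homotopically projective over $A \otimes B$, so that $\Hom_{A \otimes B}(M \otimes S,\, N \otimes T)$ really computes the derived functor $\RHom_{A \otimes B}(M \otimes S,\, N \otimes T)$; this is precisely what is guaranteed by $M \otimes S \in \Prf(A \otimes B)$ together with the fact that perfect modules are homotopically projective. The rest of the proof is a purely formal dévissage and should cause no difficulty.
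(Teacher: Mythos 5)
Your proof is correct: the paper actually states Proposition \ref{split} without any proof, and your argument supplies one by exactly the same double d\'evissage (reduction to $M=A$, $S=B$, induction on cones via the five lemma, passage to homotopy direct summands, with homotopy projectivity of $M\otimes S$ justifying the use of underived $\Hom$) that the paper itself uses to prove the analogous isomorphisms (\ref{dual anneau}) and (\ref{calcul inter}). No gaps.
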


\begin{proof}
clear.
\end{proof}
A special case of the above proposition is
\begin{prop}[Künneth isomorphism] \label{Kunneth}
Let A and B be proper homologically smooth dg algebras. There is a natural isomorphism
\begin{equation*}
\Ku{A}{B}:\Hoc(A) \otimes \Hoc(B) \stackrel{\sim}{\rightarrow}\Hoc(A \otimes B).
\end{equation*}
\end{prop}

\subsection{The Hochschild class}\label{class}

 In this subsection, following \cite{KS3}, we construct the Hochschild class of an endomorphism of a perfect module and describe the Hochschild class of this endomorphism when the Hochschild homology is expressed in term of dualizing objects.\\
 
To build the Hochschild class, we need to construct some morphism of $\Der_{\mathrm{perf}}(A^e)$. 

\begin{lemme}\label{unit counit}
Let $M$ be a perfect $A$-module. There is a natural isomorphism

\begin{equation} \label{iso hhclass}
\xymatrix{
\RHom_A(M,M) \ar[r]^-{\sim}& \RHom_{A^e}(\omega^{-1}_A, M \otimes \Du^{\prime}_A M).}
\end{equation}
\end{lemme}

\begin{proof}
We have
\begin{equation*}
\begin{split}
\RHom_A(M,M) &\simeq \Du^{\prime}_A M \Lte_A M \\
             &\simeq \op{A} \Lte_{A^e} (M \otimes \Du^{\prime}_A M)\\
             &\simeq \RHom_{A^e}(\omega^{-1}_A, M \otimes \Du^{\prime}_A M).
\end{split}
\end{equation*}

Thus, we get an isomorphism

\begin{equation}
\xymatrix{
\RHom_A(M,M) \ar[r]^-{\sim}& \RHom_{A^e}(\omega^{-1}_A, M \otimes \Du^{\prime}_A M).}
\end{equation}
\end{proof}

\begin{defi} 
The morphism $\eta$  in $\Der_{\mathrm{perf}}(A^e)$ is the image of the identity of $M$ by morphism (\ref{iso hhclass}) and $\varepsilon$  in $\Der_{\mathrm{perf}}(A^e)$ is obtained from $\eta$ by duality.
\begin{equation} \label{preval}
\eta:\omega^{-1}_A \to M \otimes \Du^{\prime}_A M,\\
\end{equation}
\begin{equation} \label{eval}
\varepsilon:M \otimes \Du^{\prime}_A M \to A.
\end{equation}
The map $\eta$ is called the coevaluation map and $\varepsilon$ the evaluation map.
\end{defi}

 Applying $\Du^{\prime}_{A^e}$ to (\ref{preval}) we obtain a map 
\begin{equation*}
\Du^{\prime}_{A^e}( M \otimes \Du^{\prime}_A M) \to \op{A}.
\end{equation*}
Then using the isomorphism of Proposition \ref{split},                                      
\begin{equation*}
\begin{split}
\Du^{\prime}_{A^e}( M \otimes \Du^{\prime}_A M) & \simeq \Du^{\prime}_{A}( M)  \otimes \Du^{\prime}_{\op{A}}( \Du^{\prime}_A M)\\
                                                     & \simeq \Du^{\prime}_{A}( M)  \otimes M,
\end{split}
\end{equation*}
we get morphism (\ref{eval}).

Let us define the Hochschild class. We have  the following chain of morphisms
\begin{equation*}
\begin{split}
\RHom_A(M,M) & \simeq \Du^{\prime}_A M \Lte_A M\\
             & \simeq \op{A} \Lte_{A^e} ( M \otimes \Du^{\prime}_A M)\\
             & \stackrel{\id \otimes \varepsilon}{\longrightarrow} \op{A} \Lte_{A^e} A.
\end{split}
\end{equation*}%

We get a map 
\begin{equation} \label{image}
\Hom_{\Der_{\mathrm{perf}}(A)}(M,M) \rightarrow \HH_0(A).
\end{equation}

\begin{defi}
The image of an element $f \in \Hom_{\Dper(A)}(M,M)$  by the map (\ref{image}) is called the Hochschild class of $f$ and is denoted $\hh_A(M,f)$. The Hochschild class of the identity is denoted $\hh_A(M)$ and is called the Hochschild class of $M$.\\
\end{defi}
\begin{Rem}
If $A=k$ and $M \in \Dper(k)$, then  
\begin{equation*}
\hh_k(M,f)=\sum_i (-1)^i \tr(\Hn^i(f:M \rightarrow M)), 
\end{equation*}
see for instance \cite{KS3}.
\end{Rem}
\begin{lemme}\label{lemme de morphisation}
The isomorphism (\ref{dual description}) sends $\hh_A(M,f)$ to the image of $f$ under the composition
\begin{equation*}
\xymatrix{
\RHom_A(M,M) \ar[r]^-{\sim}& \RHom_{A^e}(\omega^{-1}_A, M \otimes \Du^{\prime}_A M) \ar[r]^-{\varepsilon \circ} & \RHom_{A^e}(\w_A^{-1},A)}
\end{equation*}
 where the first morphism is defined in (\ref{iso hhclass}) and the second morphism is induced by the evaluation map.
\end{lemme}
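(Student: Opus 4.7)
The plan is to reduce the lemma to the functoriality of the isomorphism used in Proposition~\ref{homisation}. That isomorphism, stated there for the particular $A^e$-module $A$, is built as the composite $\op{A} \Lte_{A^e} A \simeq \Du_{A^e}(\w_A^{-1}) \Lte_{A^e} A \simeq \RHom_{A^e}(\w_A^{-1}, A)$; the same construction makes sense for an arbitrary $X \in \Der(A^e)$ in place of $A$, yielding a natural isomorphism $\op{A} \Lte_{A^e} X \simeq \RHom_{A^e}(\w_A^{-1}, X)$. Naturality uses perfectness of $\w_A^{-1}$ together with the standard identification $\Du_{A^e}(P) \Lte_{A^e} (-) \simeq \RHom_{A^e}(P,-)$ for any perfect $P \in \Der(A^e)$, which is tautological when $P = A^e$ and extends by closure under cones and retracts.

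Applying this naturality to the morphism $\id_{\op{A}} \otimes \varepsilon$ induced by the evaluation $\varepsilon : M \otimes_k \Du_A M \to A$ of Lemma~\ref{unit counit}, one obtains a commutative square
\begin{equation*}
\xymatrix{
\op{A} \Lte_{A^e}(M \otimes_k \Du_A M) \ar[r]^-{\id \otimes \varepsilon} \ar[d]_-{\wr} & \op{A} \Lte_{A^e} A \ar[d]^-{\wr} \\
\RHom_{A^e}(\w_A^{-1}, M \otimes_k \Du_A M) \ar[r]^-{\varepsilon \circ} & \RHom_{A^e}(\w_A^{-1}, A).
}
\end{equation*}
Inspecting the proof of Lemma~\ref{unit counit}, the isomorphism (\ref{iso hhclass}) is, by construction, precisely the composite $\RHom_A(M,M) \simeq \Du_A M \Lte_A M \simeq \op{A} \Lte_{A^e}(M \otimes_k \Du_A M) \simeq \RHom_{A^e}(\w_A^{-1}, M \otimes_k \Du_A M)$, where the last arrow is the functorial isomorphism just discussed.

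Prepending the isomorphism $\RHom_A(M,M) \simeq \op{A} \Lte_{A^e}(M \otimes_k \Du_A M)$ to the left column of the square above produces a commutative diagram whose top composite sends $f$ to $\hh_A(M,f)$ by the very definition of the Hochschild class, and whose bottom composite is exactly the one appearing in the statement. Chasing $f$ through this diagram and noting that the right vertical arrow is literally the isomorphism of Proposition~\ref{homisation} yields the claim. The only non-routine step is thus establishing the $A^e$-functoriality of the Proposition~\ref{homisation} identification; everything after that is a diagram chase that merely repackages the definition of $\hh_A(M,f)$.
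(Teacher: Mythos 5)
Your proof is correct and is essentially the paper's own argument: the paper proves the lemma by exhibiting exactly the commutative diagram you construct, with the top row being the defining chain for $\hh_A(M,f)$, the bottom row the composite in the statement, and the vertical arrows the identifications $\op{A} \Lte_{A^e} X \simeq \RHom_{A^e}(\w_A^{-1},X)$. You merely make explicit the naturality argument for the right-hand square, which the paper leaves implicit.
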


\begin{proof}
This follows from the commutative diagram:
$$
\xymatrix{ \RHom_A(M,M) \ar[d]_-{\id} \ar[r]^-{\sim}& \op{A} \Lte_{A^e} ( M \otimes \Du^{\prime}_A M) \ar[d] \ar[r]^-{\id_A \otimes \varepsilon}& \op{A} \Lte_{A^e} A              \ar[d]\\
            \RHom_A(M,M) \ar[r]^-{\sim} & \RHom_{A^e}(\w_A^{-1},M \otimes \Du^{\prime}_A M) \ar[r]^-{\varepsilon} & \RHom_{A^e}(\w_A^{-1},A)
.}
$$
\end{proof}  

%

\begin{Rem}
Our definition of the Hochschild class is equivalent to the definition of the trace of a 2-cell in \cite{ponto}.  
This equivalence allows us to use string diagrams to prove some properties of the Hochschild class, see \cite{ponto} and \cite{CaldararuI}, \cite{caldararu}. 
\end{Rem}

\begin{prop}\label{conjugaison}
Let $M, N \in \Der_{\mathrm{perf}}(A)$, $g \in \Hom_A(M,N)$ and $h \in \Hom_A(N,M)$ then
\begin{equation*}
\hh_A(N,g \circ h)= \hh_A(M, h \circ g).
\end{equation*}
\end{prop}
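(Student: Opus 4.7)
My plan is to realize both Hochschild classes as images of one and the same element under two composite morphisms that differ only in the order in which two commuting evaluation maps are applied.

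Using that $M$ and $N$ are perfect, I would identify
\[ \Hom_A(N,M) \simeq \Du_A N \Lte_A M, \qquad \Hom_A(M,N) \simeq \Du_A M \Lte_A N, \]
and let $\tilde h$, $\tilde g$ denote the elements corresponding to $h$ and $g$. The key preliminary fact, which I would verify first, is that under these identifications the composition pairing $\Hom_A(X,Y)\otimes_k\Hom_A(Y,Z) \to \Hom_A(X,Z)$ is given by contraction along the evaluation map $\varepsilon_Y\colon Y\otimes_k \Du_A Y \to A$ of Lemma \ref{unit counit}: namely $\widetilde{g\circ h} \in \Du_A N \Lte_A N$ is the image of $\tilde h \otimes \tilde g$ obtained by applying $\varepsilon_M$ to the inner $M \otimes_k \Du_A M$ factor, and symmetrically $\widetilde{h\circ g} \in \Du_A M \Lte_A M$ is obtained from $\tilde g \otimes \tilde h$ via $\varepsilon_N$.

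Next I would unwind the chain defining $\hh_A$ to observe that the complete class $\hh_A(N, g\circ h)$ is obtained from $\tilde h \otimes \tilde g$ by performing, in order, first $\varepsilon_M$ (producing $\widetilde{g\circ h}$) and then $\varepsilon_N$ (performing the trace step), landing in $\op{A}\Lte_{A^e}A = \HH_0(A)$. The class $\hh_A(M, h\circ g)$ is obtained from the same data by applying $\varepsilon_N$ first and $\varepsilon_M$ second. Both routes factor through the common object
\[ \op{A}\Lte_{A^e}\bigl(N \otimes_k \Du_A N \otimes_k M \otimes_k \Du_A M\bigr), \]
equipped with a cyclically rearranged $A^e$-action, on which $\id \otimes \varepsilon_M$ and $\id \otimes \varepsilon_N$ act on disjoint tensor factors and therefore commute. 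This yields $\hh_A(N, g\circ h) = \hh_A(M, h\circ g)$.

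The main obstacle is purely bookkeeping: one must carefully track the cyclic permutations of tensor factors together with the attendant left vs.\ right $A^e$-module structures and Koszul signs in order to honestly identify the two routes as two ways of computing the same morphism out of a common universal source. Conceptually, the argument is nothing other than the classical cyclicity $\tr(gh) = \tr(hg)$, transplanted into the rigid setting of perfect $A$-modules, and, as the preceding Remark suggests, it admits a transparent realization through the string-diagram calculus of \cite{Ponto}.
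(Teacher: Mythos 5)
Your proposal is correct and is, in substance, the very argument the paper has in mind: the paper declines to write a proof, saying only that it ``uses string diagrams'' via the identification of $\hh_A(M,f)$ with Ponto's trace of a $2$-cell, and the diagrammatic proof of cyclicity is exactly your observation that $\varepsilon_M$ and $\varepsilon_N$ act on disjoint tensor factors of the common object $\op{A}\Lte_{A^e}\bigl((M\otimes_k \Du_A M)\Lte_A(N\otimes_k \Du_A N)\bigr)$ and hence commute, combined with the cyclic symmetry $\op{A}\Lte_{A^e}(X\Lte_A Y)\simeq \op{A}\Lte_{A^e}(Y\Lte_A X)$. You have also correctly isolated the only delicate point --- verifying that composition corresponds to contraction along $\varepsilon$ and tracking the cyclic rearrangements and Koszul signs --- which is precisely the bookkeeping the string-diagram calculus is designed to suppress.
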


\begin{proof}
See for instance \cite[§7]{ponto}.
\end{proof}

\section{A pairing on Hochschild homology}
In this section, we  build a pairing on Hochschild homology. It acts as the Hochschild class of the diagonal, (see \cite{KS3}, \cite{CaldararuI}, \cite{caldararu}, \cite{shklyarov}). Using this result, we prove our Riemann-Roch type formula. We follow the approach of \cite{KS3}.

\subsection{Hochschild homology and bimodules}\label{spairing}

In this subsection, we translate to our language the classical fact that a perfect $A \otimes \op{B}$-module induces a morphism from $\Hoc(B)$ to $\Hoc(A)$. We need the following technical lemma which generalizes Lemma \ref{unit counit}. 

\begin{lemme} \label{technical}
Let $K \in \Der_{\mathrm{perf}}(A \otimes \op{B})$. Let $C=A \otimes \op{B}$. Then, there are natural morphisms in $\Der_{\mathrm{perf}}(A^e)$ which coincide with (\ref{preval}) and (\ref{eval}) when $B=k$,
\begin{equation*}
\omega^{-1}_A \rightarrow K \Lte_{B} \Du^{\prime}_{C} K,   
\end{equation*} 
\begin{equation} \label{evaluation gene}
K \Lte_{B} \w_B \Lte_{B} \Du^{\prime}_{C} K \rightarrow A. 
\end{equation}
\end{lemme}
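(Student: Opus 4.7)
The plan is to mimic the proof of Lemma \ref{unit counit}, replacing the Serre duality chain for an $A$-module $M$ by its analogue for the $C$-module $\mathcal{K}$, and using Theorem \ref{contraction} in place of Proposition \ref{split} when dualizing.

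For the coevaluation, I would first assemble the chain of natural isomorphisms
\begin{equation*}
\RHom_C(\mathcal{K}, \mathcal{K}) \simeq \Du_C \mathcal{K} \Lte_C \mathcal{K} \simeq \op{A} \Lte_{A^e}(\mathcal{K} \Lte_B \Du_C \mathcal{K}) \simeq \RHom_{A^e}\bigl(\omega_A^{-1}, \mathcal{K} \Lte_B \Du_C \mathcal{K}\bigr).
\end{equation*}
The first step is Serre duality for the perfect $C$-module $\mathcal{K}$, obtained exactly as in the proof of Lemma \ref{unit counit}. The second is the cyclic identification: tensoring $\mathcal{K}$ and $\Du_C \mathcal{K}$ over $C = A \otimes \op{B}$ can be performed by first contracting the $B$-actions (producing the $A^e$-module $\mathcal{K} \Lte_B \Du_C \mathcal{K}$) and then cyclically tracing the remaining two $A$-actions via $\op{A} \Lte_{A^e}$. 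The third isomorphism is Proposition \ref{homisation} applied to the $A^e$-module $\mathcal{K} \Lte_B \Du_C \mathcal{K}$, which is perfect by Theorem \ref{tensor} since $B$ is proper. The coevaluation $\omega_A^{-1} \to \mathcal{K} \Lte_B \Du_C \mathcal{K}$ is then defined as the image of $\id_{\mathcal{K}}$.

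For the evaluation, I would apply $\Du_{A^e}$ to the coevaluation. The target becomes $\Du_{A^e}(\omega_A^{-1}) \simeq A$, as in Lemma \ref{unit counit}. For the source, Theorem \ref{contraction} applied with the triple of algebras $(A, B, A)$ and the modules $\mathcal{K} \in \Der_{\textrm{per}}(A \otimes \op{B})$, $\Du_C \mathcal{K} \in \Der_{\textrm{per}}(B \otimes \op{A})$ yields
\begin{equation*}
\Du_{A^e}(\mathcal{K} \Lte_B \Du_C \mathcal{K}) \simeq \Du_{A \otimes \op{B}}(\mathcal{K}) \Lte_{\op{B}} (\op{B})^\ast \Lte_{\op{B}} \Du_{B \otimes \op{A}}(\Du_C \mathcal{K}).
\end{equation*}
Biduality (Proposition \ref{dual equi}) rewrites $\Du_{B \otimes \op{A}}(\Du_C \mathcal{K})$ as $\mathcal{K}$, and the identification of $(\op{B})^\ast$ with $\omega_B$ as $B^e$-modules (via the isomorphism $\omega_B \simeq B^\ast$ and the comparison between $B^\ast$ and $(\op{B})^\ast$ used in the proof of Proposition \ref{integration}) reshapes the right-hand side into $\mathcal{K} \Lte_B \omega_B \Lte_B \Du_C \mathcal{K}$. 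Compatibility with the $B = k$ case is immediate: when $B = k$ one has $C = A$, $\omega_k = k$, and the whole construction collapses to Lemma \ref{unit counit}.

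The main technical burden is the bookkeeping around Theorem \ref{contraction}: tracking which factor of $B$ acts on which side of $\mathcal{K}$ and $\Du_C \mathcal{K}$, and systematically identifying $(\op{B})^\ast$ with $\omega_B$, so that the resulting evaluation lands in the correct $A$-bimodule rather than a variant with swapped roles.
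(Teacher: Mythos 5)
Your construction is correct, but it follows a genuinely different route from the paper's. The paper starts from the \emph{absolute} coevaluation and evaluation of $\K$ viewed as a perfect $C$-module (Lemma \ref{unit counit} applied over $C$, living in $\Der_{\textrm{per}}(A \otimes \op{B} \otimes B \otimes \op{A})$) and then partially contracts the $B$-variables: for the coevaluation it applies $- \Lte_{B^e} B$, identifies $\w_C^{-1} \Lte_{B^e} B \simeq \w_A^{-1} \otimes (\Du_{B^e} B \Lte_{B^e} B)$ via Proposition \ref{split}, and precomposes with the unit $k \to \RHom_{B^e}(B,B)$; for the evaluation it applies $- \Lte_{B^e} \w_B$ to $\K \otimes \Du_C\K \to C$ and composes with the integration morphism of Proposition \ref{integration}. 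You instead realize the coevaluation directly as the image of $\id_{\K}$ under a ``partial trace over $B$'' isomorphism $\RHom_C(\K,\K) \simeq \RHom_{A^e}(\w_A^{-1}, \K \Lte_B \Du_C\K)$, and you get the evaluation by dualizing and invoking Theorem \ref{contraction} to compute $\Du_{A^e}(\K \Lte_B \Du_C\K)$. Both approaches are legitimate: yours parallels the proof of Lemma \ref{unit counit} more literally and makes the ``image of the identity'' description explicit, while the paper's keeps the two maps symmetric (both arise by contracting the absolute maps) and avoids Theorem \ref{contraction} here, using only the unit and integration morphisms. Two small caveats: Proposition \ref{homisation} as stated concerns $\op{A} \Lte_{A^e} A$ only, so you are implicitly using its proof (biduality of $\op{A}$ plus perfectness of $\w_A^{-1}$) for the general perfect $A^e$-module $\K \Lte_B \Du_C\K$ — worth saying; and since later arguments (Proposition \ref{action}) use these specific morphisms, one should check that your image-of-the-identity coevaluation agrees with the paper's composite of the $C$-coevaluation with the unit, which is the expected compatibility of traces with partial traces but is not entirely free.
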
  
\begin{proof}
By (\ref{preval}), we have  a morphism in $\Der_{\mathrm{perf}}(A \otimes \op{B} \otimes B \otimes \op{A})$
\begin{equation*}
\w_{C}^{-1} \rightarrow K \otimes \Du^{\prime}_C K.
\end{equation*}
Applying the functor $-\Lte_{B^e} B$, we obtain
\begin{equation*}
\w_{C}^{-1} \Lte_{B^e} B \rightarrow K \otimes \Du^{\prime}_C K \Lte_{B^e} B
\end{equation*}

and by Proposition \ref{split} 
\begin{equation*}
\w_C^{-1}\simeq\w_A^{-1} \otimes \w_{\op{B}}^{-1} \simeq \w_A^{-1} \otimes \Du^{\prime}_{B^e}(B). 
\end{equation*}
Then there is a sequence of isomorphisms 
\begin{equation*}
\w_A^{-1} \otimes \RHom_{B^e}(B,B) \stackrel{\sim}{\to} \w_A^{-1} \otimes (\Du^{\prime}_{B^e} B \Lte_{B^e} B) \stackrel{\sim}{\to} \w_{C}^{-1} \Lte_{B^e} B
\end{equation*}
and there is a natural arrow $\w_A^{-1} \stackrel{\id \otimes 1}{\longrightarrow} \w_A^{-1} \otimes \RHom_{B^e}(B,B)$. Composing these maps, we obtain the morphism

\begin{equation*}
\w_A^{-1} \rightarrow \w_A^{-1} \otimes (\Du^{\prime}_{B^e} B \Lte_{B^e}  B) \rightarrow \w_{C}^{-1} \Lte_{B^e} B \rightarrow (K \otimes \Du^{\prime}_C K) \Lte_{B^e} B.
\end{equation*}

For the map (\ref{evaluation gene}), we have a morphism in $\Der_{\mathrm{perf}}(A \otimes \op{B} \otimes B \otimes \op{A})$ given by the map (\ref{eval})

\begin{equation*}
 K \otimes \Du^{\prime}_C K \rightarrow C.
\end{equation*} 
Then applying the functor $-\Lte_{B^e} \w_B$, we obtain
\begin{equation*}
(K \otimes \Du^{\prime}_C K) \Lte_{B^e} \w_B \rightarrow C \Lte_{B^e} \w_B \simeq (A \otimes \op{B})  \Lte_{B^e} \w_B.
\end{equation*}
Composing with the natural "integration" morphism of Proposition \ref{integration}, we get
\begin{equation*}
(K \otimes \Du^{\prime}_C K) \Lte_{B^e} \w_B \rightarrow A
\end{equation*}%
which proves the lemma.
\end{proof}

We shall show that an object in $\Der_{\mathrm{perf}}(A \otimes \op{B})$ induces a morphism between the Hochschild homology of $A$ and that of $B$.

Let $K \in \Der_{\mathrm{perf}}(A\otimes \op{B})$. We set $C=A \otimes \op{B}$ and $S=K \otimes (\w_B \Lte_B \Du^\prime_C(K)) \in \Der(A^e \otimes {\op{({B}^e)}})$.

We have
\begin{align*}
S \Lte_{B^e} \w_B^{-1} &\simeq K \Lte_B \w_B^{-1} \Lte_B \w_B \Lte_B \Du_C(K) && S \Lte_{B^e} B \simeq K \Lte_{B} \w_B \Lte_B \Du_C^\prime(K).\\
&\simeq K \Lte_B \Du^\prime_C(K) && 
\end{align*}
  
The map
\begin{equation} \label{phikconstruct} 
\Phi_K : \Hoc(B) \rightarrow \Hoc(A).
\end{equation}
is defined as follow.
\begin{align*}
\RHom_{B^e}(\w_B^{-1},B)& \to  \RHom_{C}(S \Lte_B \w_B^{-1} , S \Lte_B B)\\ 
&\to  \RHom_{A^e}(\w^{-1}_A,A). 
\end{align*}
The last arrow is associated with the morphisms in Lemma \ref{technical}. This defines the map (\ref{phikconstruct}).

\subsection{A pairing on Hochschild homology}\label{action pairing}

In this subsection, we build a pairing on the Hochschild homology of a dg algebra. It acts as the Hochschild class of the diagonal, (see \cite{KS3}, \cite{CaldararuI}, \cite{caldararu}, \cite{shklyarov}). We also relate $\Phi_K$ to this pairing.\\

A natural construction to obtain a pairing on the Hochshild homology of a dg algebra $A$ is the following one.

Consider $A$ as a perfect $k- {}^eA$ bimodule. The morphism (\ref{phikconstruct}) with $K=A$ provides a map 
\begin{equation*}
\Phi_A: \Hoc(\eA) \rightarrow \Hoc(k).
\end{equation*}
We compose $\Phi_A$ with $\Ku{\op{A}}{A}$ and get
\begin{equation}\label{hom pairing}
\Hoc(\op{A}) \otimes \Hoc(A) \rightarrow k.
\end{equation}
Taking the $0^th$ degree homology, we obtain
\begin{equation}\label{pairing crochet}
\langle \cdot , \cdot \rangle : (\bigoplus_{n \in \Z} \HH_{-n}(\op{A}) \otimes \HH_{n}(A)) \rightarrow k.
\end{equation}
 In other words $\langle \cdot , \cdot \rangle =\Hn^0( \Phi_A) \circ \Hn^{0}(\Ku{\op{A}}{A}).$
 
However, it is not clear how to express $\Phi_K$ in term of the Hochschild class of $K$ using the above construction of the pairing. Thus, we propose another construction of the pairing and shows it coincides with the previous one.

\begin{prop}\label{action}
Let $A, B, C$ be three proper homologically smooth dg algebras and $K$ an object of $\Dper(A \otimes \op{B})$.\\
There is a natural map
\begin{equation*}
\Hoc(A \otimes \op{B}) \otimes \Hoc(B \otimes \op{C}) \rightarrow \Hoc(A \otimes \op{C})
\end{equation*}
inducing, for every $i \in \mathbb{Z}$, an operation
\begin{equation*}
\cup_{B} : \bigoplus_{n \in \Z} (\HH_{-n}(A \otimes \op{B}) \otimes \HH_{n+i}(B \otimes \op{C}) )\rightarrow \HH_i(A \otimes \op{C}) 
\end{equation*}
such that for every $\lambda \in \HH_i(B \otimes \op{C})$, $\Hn^i(\Phi_{K}\otimes \id)(\lambda)=\hh_{A \otimes \op{B}}(K) \cup_{B}\lambda$.
\end{prop}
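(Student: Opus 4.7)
The plan is to construct $\cup_B$ by adapting the construction of the morphism $\Phi_\K$ carried out in Subsection \ref{spairing}. The essential tools are Proposition \ref{homisation}, which expresses Hochschild homology as $\Hoc(R) \simeq \RHom_{R^e}(\w_R^{-1}, R)$ for any proper homologically smooth dg algebra $R$; the decomposition $\w_{R \otimes S}^{-1} \simeq \w_R^{-1} \otimes \w_S^{-1}$ (which follows from Proposition \ref{split}); and the integration morphism $\int \colon \w_{\op{B}} \Lte_{B^e} B \to k$ of Proposition \ref{integration}.

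To produce the map on Hochschild complexes I would first represent classes in $\Hoc(A \otimes \op{B})$ and $\Hoc(B \otimes \op{C})$ by morphisms $\alpha \colon \w_{A \otimes \op{B}}^{-1} \to A \otimes \op{B}$ and $\beta \colon \w_{B \otimes \op{C}}^{-1} \to B \otimes \op{C}$ in the appropriate derived categories. Their external tensor product, combined with the decomposition of the $\w$-factors above, gives a morphism
$$\w_A^{-1} \otimes \w_{\op{B}}^{-1} \otimes \w_B^{-1} \otimes \w_{\op{C}}^{-1} \to (A \otimes \op{B}) \otimes (B \otimes \op{C}).$$
Multiplication along the middle $B$-factors followed by integration along $B^e$ (using Proposition \ref{integration} to absorb the $\w_{\op{B}}^{-1} \otimes \w_B^{-1}$ piece in the source) yields a morphism
$$\w_{A \otimes \op{C}}^{-1} \simeq \w_A^{-1} \otimes \w_{\op{C}}^{-1} \to A \otimes \op{C},$$
which represents the class $\alpha \cup_B \beta$. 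Exactly as in the construction of $\Phi_\K$, everything is carried out at the derived level by replacing each object with a homotopically projective resolution; such resolutions exist and are perfect thanks to Theorem \ref{dual vect} and the hypotheses of smoothness and properness on $A$, $B$, $C$ and their tensor products.

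For the compatibility formula $\Hn^0(\Phi_\K)(\lambda) = \hh_{A \otimes \op{B}}(\K) \cup_B \lambda$, the plan is to appeal to Lemma \ref{lemme de morphisation}, which identifies $\hh_{A \otimes \op{B}}(\K)$ with the composition
$$\w_{A \otimes \op{B}}^{-1} \to \K \otimes \Du_{A \otimes \op{B}} \K \to A \otimes \op{B}$$
built from the coevaluation and evaluation of Lemma \ref{unit counit}. Plugging this representative into the construction of $\cup_B$ and unwinding, the resulting chain of morphisms should coincide, term by term, with the chain defining $\Phi_\K$ in Subsection \ref{spairing}: the coevaluation/evaluation maps for $\K$ are precisely the ingredients of $\Phi_\K$, while the integration along $B$ in $\cup_B$ matches the generalized evaluation of Lemma \ref{technical}. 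I expect the main obstacle to be the bookkeeping of dualizing objects and tensor-factor permutations needed to check that each intermediate diagram commutes in the derived category; once the key identifications — principally $\w_{R \otimes S}^{-1} \simeq \w_R^{-1} \otimes \w_S^{-1}$, the compatibility of evaluation with multiplication, and the naturality of the integration morphism — are in place, the verification becomes essentially formal.
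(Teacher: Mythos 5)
Your overall strategy---describe Hochschild homology via $\RHom_{R^e}(\w_R^{-1},R)$, form the external (K\"unneth) product, and contract the two middle $B$-factors using a unit morphism and the integration morphism---is the right one and is in essence the paper's. But the central contraction step, as you describe it, does not typecheck, and this is precisely where the one nontrivial idea of the proof lives. After the external product you have a morphism with source $\w_A^{-1}\otimes\w_{\op{B}}^{-1}\otimes\w_B^{-1}\otimes\w_{\op{C}}^{-1}$ and target $(A\otimes\op{B})\otimes(B\otimes\op{C})$. You propose to ``absorb the $\w_{\op{B}}^{-1}\otimes\w_B^{-1}$ piece in the source'' using Proposition \ref{integration}; but that proposition supplies a map $\w_{\op{B}}\Lte_{B^e}B\to k$, which involves the dualizing object $\w_{\op{B}}$ (not its inverse) and goes \emph{out of} a pairing, so it can only be applied in the \emph{target}, and only when the target contains a $\w_{\op{B}}$-factor sitting next to a $B$-factor. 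Dually, the source must be contracted by the unit $k\to\RHom_{B^e}(\op{B},\op{B})\simeq\op{B}\Lte_{B^e}\w_B^{-1}$, which requires exactly one inverse dualizing factor in the source paired against a copy of $\op{B}$. With your bookkeeping the source carries two inverse dualizing factors and the target two plain copies of $B$, so neither morphism applies; and ``multiplication along the middle $B$-factors'' of the target yields $A\otimes(\op{B}\Lte_{B^e}B)\otimes\op{C}\simeq A\otimes\Hoc(B)\otimes\op{C}$, which has no natural map to $A\otimes\op{C}$.

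The missing device is the preliminary asymmetric rewriting
\begin{equation*}
\Hoc(A\otimes\op{B})\;\simeq\;\RHom_{A^e\otimes\eB}\bigl(\w_A^{-1}\otimes\op{B},\;A\otimes\w_{\op{B}}\bigr),
\end{equation*}
obtained by tensoring the source and target of $\RHom(\w_{A\otimes\op{B}}^{-1},A\otimes\op{B})$ with $\w_{\op{B}}$ over $\op{B}$: it moves one inverse dualizing factor out of the source and installs a genuine dualizing factor in the target. Setting $S_{AB}=\w_A^{-1}\otimes\op{B}$ and $T_{AB}=A\otimes\w_{\op{B}}$ (and similarly for the pair $B,C$), the composition map lands in $\RHom(S_{AB}\Lte_{B^e}S_{BC},\,T_{AB}\Lte_{B^e}T_{BC})$, the unit gives $S_{AC}\to S_{AB}\Lte_{B^e}S_{BC}$, the integration gives $T_{AB}\Lte_{B^e}T_{BC}\to T_{AC}$, and the construction closes up. Your plan for the identity $\Hn^0(\Phi_{\K})(\lambda)=\hh_{A\otimes\op{B}}(\K)\cup_B\lambda$ (represent $\hh(\K)$ via Lemma \ref{lemme de morphisation} and match the resulting chain of maps against the definition of $\Phi_{\K}$ from Subsection \ref{spairing}) is indeed the paper's argument, but the commutative diagram to be checked is organized around this same asymmetric rewriting, so it cannot be carried out until the construction of $\cup_B$ is repaired as above.
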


Before proving Proposition \ref{action}, let us do the following remark.

\begin{Rem}
Let $M \in \Der_{\mathrm{perf}}(A)$. There is an isomorphism in $\Der_{\mathrm{perf}}(k)$
\begin{equation*}
\w_A \Lte_{A} M \simeq M \Lte_{\op{A}} \w_{\op{A}}.
\end{equation*}
\end{Rem}

The next proof explains the construction of $\cup_B : \bigoplus_{n \in \Z} (\HH_{-n}(A \otimes \op{B}) \otimes \HH_{n+i}(B \otimes \op{C})) \rightarrow \HH_i(A \otimes \op{C})$. We also prove the equality $\Hn^i(\Phi_K \otimes \id)(\lambda)= \hh_{A\otimes \op{B}}(K) \cup_B \lambda$.

\begin{proof}[Proof of Proposition \ref{action} ]
\begin{enumerate}[(i)]
\item We identify $\op{(A \otimes \op{B})}$ and $\op{A} \otimes B$.\\ 
We have
\begin{equation*}
\begin{split}
\Hoc(A \otimes \op{B}) \simeq& \RHom_{A^e \otimes \eB}(\omega_{A \otimes \op{B}}^{-1},A \otimes \op{B})\\
\simeq& \RHom_{A^e \otimes \eB}(\omega_{A}^{-1} \otimes \omega_{\op{B}}^{-1} \Lte_{\op{B}} \omega_{\op{B}},A \otimes \op{B} \Lte_{\op{B}} \omega_{\op{B}})\\
 \simeq& \RHom_{A^e \otimes \eB}(\omega_{A}^{-1} \otimes \op{B},A \otimes \omega_{\op{B}}).
\end{split}
\end{equation*}

Let $S_{AB}=\omega_{A}^{-1} \otimes \op{B}$ and $T_{AB}=A \otimes \omega_{\op{B}}$. Similarly, we define $S_{BC}$ and $T_{BC}$. Then, we get\\ 
\begin{equation*}
\begin{split}
\Hoc(A \otimes \op{B}) \otimes & \Hoc(B \otimes \op{C}) \\
 &\simeq \RHom_{A^e \otimes \eB}(S_{AB},T_{AB}) \otimes  \RHom_{B^e \otimes \eC}(S_{BC},T_{BC})\\ 
                            & \to \RHom_{A^e \otimes \eC}(S_{AB} \Lte_{B^e} S_{BC}, T_{AB} \Lte_{B^e} T_{BC}).
\end{split}
\end{equation*}    
Using the morphism $k \to \RHom_{{}^e \!B}(\op{B},\op{B})$, we get
\begin{equation*}
 k \rightarrow \op{B} \Lte_{B^e} \w_{B}^{-1}.
\end{equation*}
Thus, we get

\begin{equation*}
\omega_{A}^{-1} \otimes \op{C} \rightarrow (\omega_{A}^{-1} \otimes \op{B}) \Lte_{B^e} (\w_{B}^{-1} \otimes \op{C}).
\end{equation*}
We know by Proposition \ref{integration} that there is a morphism
\begin{equation*}
\w_{\op{B}} \Lte_{B^e} B \rightarrow k.
\end{equation*}
we deduce a morphism
\begin{equation*}
(A \otimes \w_{\op{B}}) \Lte_{B^e}( B \otimes \omega_{\op{C}}) \rightarrow A \otimes \omega_{\op{C}}.
\end{equation*}%
Therefore we get
 \begin{equation*}
\Hoc(A \otimes \op{B}) \otimes \Hoc(B \otimes \op{C}) \rightarrow \Hoc(A \otimes \op{C}).
\end{equation*}%
Finally, taking the cohomology we obtain,
\begin{equation*}
\bigoplus_{n \in \Z}(\HH_{-n}(A \otimes \op{B}) \otimes \HH_{n+i}(B \otimes \op{C})) \rightarrow \HH_i(A \otimes \op{C}).
\end{equation*}

\item 
We follow the proof of \cite{KS3}. We only need to prove the case where $C=k$. The general case being a consequences of Lemma \ref{equivalence} (i) below. 
We set $P= A \otimes \op{B}$.
 Let $\alpha=\hh_{A \otimes \op{B}}(K)$. We assume that $\lambda \in \HH_0(B)$. The proof being similar if $\lambda \in \HH_i(B)$. By Proposition \ref{lemme de morphisation}, $\alpha$ can be viewed as a morphism of the form
\begin{equation*}
 \w_{A \otimes \op{B}}^{-1} \rightarrow K \otimes \Du^\prime_{P}(K) \rightarrow A \otimes \op{B}.
\end{equation*}
 
We consider $\lambda$ as a morphism $\w_{B}^{-1} \rightarrow B$. Then, following the construction of $\Phi_K$, we observe that $\Phi_{K}(\lambda)$ is obtained as the composition  
\begin{equation*}
\xymatrix{
\omega_{A}^{-1} \ar[r] & K \Lte_{B} \Du^{\prime}_{P} K \ar[r]^-{\lambda} & K \Lte_{B} \w_{B} \Lte_{B} \Du^{\prime}_{P} K \ar[r]& A
}
\end{equation*}%

We have the following commutative diagram in $\Der_{\mathrm{perf}}(k)$.
\begin{equation*}
\scalebox{0.77}{
\xymatrix @C=0.5cm @R=0.7cm{
\w_A^{-1} \ar[d]                                                      &                                                   &                   &      \\
(\w_A^{-1} \otimes \op{B}) \Lte_{B^e} \w_{B}^{-1}  \ar[r]^-{\lambda}\ar[d]^-{\wr}       & \w_A \otimes \op{B} \Lte_{B^e} B  \ar[d]^-{\wr}          &                   &       \\
((\w_{A}^{-1} \otimes \w_{\op{B}}^{-1}) \Lte_{\op{B}} \w_{\op{B}}) \Lte_{B^e} \w_{B}^{-1}    \ar[r]^-{\lambda} \ar[d]^-{\wr} &  ((\w_{A}^{-1} \otimes \w_{\op{B}}^{-1}) \Lte_{\op{B}} \w_{\op{B}}) \Lte_{B^e} B         \ar[d] \ar[rd]&         &      \\
(\w_{A \otimes B}^{-1} \Lte_{\op{B}} \w_{\op{B}}) \Lte_{B^e} \w_{B}^{-1} \ar[r]^-{\lambda} \ar[d]&   (\w_{A \otimes \op{B}}^{-1} \Lte_{\op{B}} \w_{\op{B}}) \Lte_{B^e} B \ar[d]          & ((K \otimes \Du^{\prime}_{P}K) \Lte_{\op{B}} \w_{\op{B}}) \Lte_{B^e} B \ar[ld]_-{\sim} \ar[d]&      \\
(K \otimes \w_B \Lte_{B} \Du^{\prime}_{P} K) \Lte_{B^e} \omega_{B}^{-1}\ar[r]^-{\lambda} \ar[d]^-{\wr}&  (K \otimes \w_B \Lte_{B} \Du^{\prime}_{P} K) \Lte_{B^e} B \ar[d]^-{\wr}          & (A \otimes \op{B} \Lte_{\op{B}} \w_{\op{B}}) \Lte_{B^e} B\ar[d]^-{\wr}       &      \\
K \Lte_{B} \w^{-1}_B \Lte_{B} \w_B \Lte_{B} \Du^{\prime}_P K   \ar[r]^-{\lambda}       & K \Lte_{B} B \Lte_{B} \w_B \Lte_{B} \Du^{\prime}_P K   \ar[d] & A\otimes \w_{\op{B}} \Lte_{B^e} B\ar[d]             &    \\
             & K \Lte_{B} \w_{B} \Lte_{B} \Du^{\prime}_P K \ar[r]          &      A.  & 
}%
}%
\end{equation*}

This diagram is obtained by computing $\Hn^0(\Phi_K)(\lambda)$ and $\alpha \cup \lambda$. The left column and the row on the bottom induces $\Hn^0(\Phi_K)(\lambda)$ whereas the row on the top and the right column induces $\alpha \cup \lambda$. This diagram commutes, consequently
$\Hn^0(\Phi_K)(\lambda)=\hh_{A \otimes \op{B}}(K) \cup \lambda$.
\end{enumerate}%
\end{proof}

We now give some properties of this operation.

\begin{lemme}\label{equivalence}
Let $A$, $B$, $C$ and $S$ be proper homologically smooth dg algebras, $\lambda_{A\op{B}} \in \HH_i(A \otimes \op{B})$. Then
\begin{enumerate}[(i)]

\item $\cup_B \circ (\cup_A \otimes \id)= \cup_A \circ (\id \otimes \cup_B)= \cup_{A \otimes \op{B}}$.


\item $\hh_{A \otimes \op{A}}(A) \cup_{A} \lambda_{A\op{B}}= \lambda_{A\op{B}}$  and  $\lambda_{A\op{B}} \cup_B \hh_{B \otimes \op{B}}(B)=\lambda_{A\op{B}}$. 
\end{enumerate}
\end{lemme}

\begin{proof}
\begin{enumerate}[(i)]
\item is obtained by a direct computation using the definition of $\cup$
\item results from Proposition \ref{action} \textit{(ii)} by noticing that $\Phi_A$ and $\Phi_B$ are equal to the identity.
\end{enumerate}
\end{proof}

From this natural operation we are able to deduce a pairing on Hochschild homology. Indeed using Proposition \ref{action} we obtain a pairing

\begin{equation}\label{cuppairing}
\cup : \bigoplus_{n \in \Z} (\HH_{-n}(\op{A}) \otimes \HH_n(A)) \rightarrow \HH_0(k)\simeq k. 
\end{equation} 
To relate the two preceding constructions of the pairing, we introduce a third way to construct it. Proposition $\ref{action}$ gives us a map
\begin{equation*}
\cup_{\eA} : \bigoplus_{n \in \Z}(\HH_{-n}(A^e) \otimes \HH_n(\eA)) \rightarrow \HH_0(k)\simeq k. 
\end{equation*} 
Then there is a morphism
\begin{align*}
\HH_{-n}(\op{A}) &\otimes \HH_n(A) \rightarrow k\\
\lambda \otimes \mu & \mapsto  \hh_{A^e}(A) \cup_{\eA} (\lambda \otimes \mu).
\end{align*}
Using Proposition \ref{action}, we get that 
\begin{equation*}
\Hn^i(\Phi_{A})(\lambda \otimes \mu)=\hh_{A^e}(A) \cup_{\eA} (\lambda \otimes \mu).
\end{equation*}
By Lemma \ref{equivalence}, we have 
\begin{align*}
\hh_{A^e}(A) \cup_{\eA} (\lambda \otimes \mu)&=(\lambda \cup_A \hh_{A \otimes \op{A}}(A)) \cup_{A} \mu\\
                                                                             &=\lambda \cup \mu.
\end{align*}


This proves that these three ways of defining a pairing lead to the same pairing. It also shows that the pairing is equivalent to the action of the Hochschild class of the diagonal.

\subsection{Riemann-Roch formula for dg algebras}\label{RR}

In this section we prove the Riemann-Roch formula announced in the introduction.

\begin{prop}\label{adapt}
Let $M \in \Der_{\mathrm{perf}}(\eA)$ and let $f \in \Hom_A(M,M)$. Then
\begin{equation*}
\hh_k(A \Lte_{\eA} M , \id_A \Lte_{\eA} f)= \hh_{A^e}(A) \cup \hh_{\eA}(M,f) 
\end{equation*}
\end{prop}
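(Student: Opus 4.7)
The plan is to realise $A$ as an integral kernel for the transform
\begin{equation*}
\Phi_A: \HH_0(\eA) \longrightarrow \HH_0(k) \simeq k
\end{equation*}
defined in Subsection \ref{spairing}, and then to read off both sides of the claimed equality from Proposition \ref{action}(ii). Concretely, I view $A$ as a perfect $k \otimes \op{(\eA)}$-module via the canonical identification $\op{(\eA)} \simeq A^e$ induced by the swap isomorphism.

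The first step is to apply Proposition \ref{action}(ii) to the triple of algebras $(k, \eA, k)$ with kernel $\K = A$. This yields at once
\begin{equation*}
\Hn^0(\Phi_A)(\hh_{\eA}(M,f)) = \hh_{A^e}(A) \cup_{\eA} \hh_{\eA}(M,f),
\end{equation*}
which, after identifying $\cup_{\eA}$ with the pairing $\cup$ on Hochschild homology (via the swap $s$ mentioned in the Remark preceding the statement), is precisely the right-hand side of Proposition \ref{adapt}.

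The second step is to identify the left-hand side, i.e.\ to prove
\begin{equation*}
\Hn^0(\Phi_A)(\hh_{\eA}(M,f)) = \hh_k(A \Lte_{\eA} M,\, \id_A \Lte_{\eA} f).
\end{equation*}
This is a functoriality statement for Hochschild classes: the kernel transform $\Phi_A$ should send the Hochschild class of a pair $(M,f)$ to the Hochschild class of its pushforward. To verify it, I would unwind both constructions in parallel. The class $\hh_{\eA}(M,f)$ is built (Subsection \ref{class}) from the evaluation map $\varepsilon_M: M \otimes_k \Du_{\eA} M \to \eA$ twisted by $f$, while $\Phi_A$ is defined (Subsection \ref{spairing}) from the coevaluation and evaluation morphisms for $A$ supplied by Lemma \ref{technical}. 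Stringing these together and using Propositions \ref{split} and \ref{dualhom} to reshuffle the tensor and $\RHom$ factors, one recovers exactly the composition that defines $\hh_k$ for the pushforward pair.

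I expect the main obstacle to be the clean verification of this last diagram, in particular the bookkeeping of Koszul signs when shuffling evaluation and coevaluation maps across the bimodule tensor products. The most transparent route is presumably the string-diagram calculus that the paper invokes for Proposition \ref{conjugaison}, following \cite{Ponto} and \cite{CaldararuI, CaldararuII}; it makes manifest that composing a trace with a kernel transform produces the trace of the transformed endomorphism. Once this compatibility is established, combining it with the first step gives the desired equality.
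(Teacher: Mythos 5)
Your proposal is correct and follows essentially the same route as the paper: the paper also views $A$ as a perfect $k\otimes\op{(\eA)}\simeq A^e$-module, establishes the functoriality identity $\Hn^0(\Phi_A)(\hh_{\eA}(M,f))=\hh_k(A\Lte_{\eA}M,\id_A\Lte_{\eA}f)$ by an explicit commutative diagram assembled from the (co)evaluation maps of Lemmas \ref{unit counit} and \ref{technical} together with Lemma \ref{lemme de morphisation}, and then concludes with Proposition \ref{action}(ii). The diagram verification you flag as the main obstacle (and propose to handle by string diagrams) is precisely the content of that commutative diagram, so the two arguments coincide.
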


\begin{proof}
Let $\lambda=\hh_{\eA}(M,f) \in \HH_0(\eA)\simeq \Hom_{(\eA)^e}(\w_{\eA}^{-1},\eA)$. As previously, we set $B=\eA$. We denote by $\tilde{f}$ the image of $f$ in $\Hom_{B^e}(\w_B^{-1},M \otimes \Du^{\prime}_B M)$ by the isomorphism (\ref{iso hhclass}) and by $\widetilde{\id_A \Lte_{\eA}f}$ the image of $\id_A \Lte_{\eA}f$ by the isomorphism (\ref{iso hhclass}) applied with $A=k$ and $M= A \Lte_B M$. We obtain the commutative diagram below.

\begin{equation*}
\scalebox{0.88}{
\xymatrix{
\w_k^{-1} \ar[r] \ar@/_4pc/[ddr]_-{\widetilde{\id_A \Lte_{\eA}f}}& A \Lte_{B} \w_B^{-1} \Lte_{B} \w_B \Lte_B \Du^{\prime}_{\op{B}} A  \ar[r]^-{\lambda} \ar[d]^-{\tilde{f}}& A \Lte_{B} B \Lte_{B} \w_B \Lte_B \Du^{\prime}_{\op{B}} A \ar[r]& k\\
                & A \Lte_B (M \otimes \Du^{\prime}_{B} M) \Lte_B \w_B \Lte_B \Du^{\prime}_{\op{B}} A \ar[d]^-{\wr} \ar[ur]^-{\varepsilon}&& \\
                 & (A \Lte_B M) \otimes \Du^{\prime}_k(A \Lte_B M) \ar@/_3pc/[rruu]&&\\
}
}
\end{equation*}

The map $\w_k^{-1} \to A \Lte_{B} \w_B^{-1} \Lte_{B} \w_B \Lte_B \Du^{\prime}_{\op{B}} A$ is obtained by applying Lemma \ref{technical} with $A=k$, $B=\eA$, $K=A$. Then,
\begin{equation*}
\w_k^{-1} \to A \Lte_{B} \Du^{\prime}_{\op{B}} A \simeq A \Lte_{B} \w_B^{-1} \Lte_{B} \w_B \Lte_B \Du^{\prime}_{\op{B}} A.
\end{equation*}

The morphism $A \Lte_{B} B \Lte_{B} \w_B \Lte_B \Du^{\prime}_{\op{B}} A \to k$ is obtained as the composition of
\begin{equation*}
A \Lte_{B} B \Lte_{B} \w_B \Lte_B \Du^{\prime}_{\op{B}} A \simeq A \Lte_{B} \w_B \Lte_B \Du^{\prime}_{\op{B}} A
\end{equation*}
with
\begin{equation} \label{abovev}
A \Lte_{B} \w_B \Lte_B \Du^{\prime}_{\op{B}} A \to k.
\end{equation}
The morphism (\ref{abovev}) is the map (\ref{evaluation gene}) with $A=k$, $B=\eA$, $K=A$.

The vertical isomorphism is obtained by applying Theorem \ref{contraction} with $A=C=k$, $B=\op{B}$, $M=M$ and $N=A$.

By Lemma \ref{lemme de morphisation}, the composition of the arrows on the bottom is $\hh_k(A \Lte_{\eA} M, \id_A \Lte_{\eA} f)$ and the composition of the arrow on the top is $\Hn^0(\Phi_A(\hh_{\eA}(M,f)))$. It results from the commutativity of the diagram that
\begin{equation*}
\hh_k(A \Lte_{\eA} M , \id_A \Lte_{\eA} f)= \Hn^{0}(\Phi_A)(\hh_{\eA}(M,f)).
\end{equation*}
Then using Proposition \ref{action} we get
\begin{equation*}
\hh_k(A \Lte_{\eA} M , \id_A \Lte_{\eA} f)= \hh_{A^e}(A) \cup \hh_{\eA}(M,f). 
\end{equation*}
\end{proof}

We state and prove our main result which can be viewed as a noncommutative generalization of A.
C\u{a}ld\u{a}raru's version of the topological Cardy condition \cite{CaldararuI}.

\begin{thm}\label{main}
Let $M \in \Der_{\mathrm{perf}}(A), \; f \in \Hom_A(M,M)$ and $ N \in \Der_{\mathrm{perf}}(\op{A}), \; g \in \Hom_{\op{A}}(N,N)$.
Then
\begin{equation*}
\hh_k(N \Lte_{A} M, g \Lte_{A} f)=  \hh_{\op{A}}(N,g) \cup \hh_A(M,f).
\end{equation*}
where $\cup$ is the pairing defined by formula (\ref{cuppairing}).
\end{thm}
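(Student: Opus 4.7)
The plan is to reduce the theorem to Proposition~\ref{adapt} by packaging $(N,g)$ and $(M,f)$ into a single perfect $\eA$-module equipped with an endomorphism. First, I would observe that the external tensor product $N \otimes M$ lies in $\Der_{\textrm{per}}(\op{A} \otimes A) = \Der_{\textrm{per}}(\eA)$ (a special case of the preservation of perfectness under external tensor product, in the spirit of Theorem~\ref{tensor}), and that $g \otimes f$ defines an $\eA$-linear endomorphism. There is then a canonical isomorphism $A \Lte_{\eA}(N \otimes M) \simeq N \Lte_A M$ in $\Der_{\textrm{per}}(k)$, under which $\id_A \Lte_{\eA}(g \otimes f)$ corresponds to $g \Lte_A f$; this is the standard tautology expressing that tensoring over $\eA = \op{A} \otimes A$ with the diagonal bimodule $A$ coequalizes the right $A$-action on $N$ against the left $A$-action on $M$.

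Invoking Proposition~\ref{adapt} with $M' = N \otimes M$ and $f' = g \otimes f$ then yields the identity $\hh_k(N \Lte_A M,\, g \Lte_A f) = \hh_{A^e}(A) \cup \hh_{\eA}(N \otimes M,\, g \otimes f)$. The next step would be to establish a K\"unneth-type multiplicativity for the Hochschild class: under the isomorphism $\Ku{\op{A}}{A}$ of Proposition~\ref{Kunneth}, one has $\hh_{\eA}(N \otimes M,\, g \otimes f) = \Ku{\op{A}}{A}\bigl(\hh_{\op{A}}(N,g) \otimes \hh_A(M,f)\bigr)$. This is expected from the construction of the Hochschild class in Lemma~\ref{unit counit} combined with Proposition~\ref{split}: the evaluation and coevaluation morphisms $\varepsilon$ and $\eta$ for $N \otimes M$ decompose as external tensor products of the corresponding morphisms for $N$ and for $M$, once one uses the splitting $\omega_{\eA}^{-1} \simeq \omega_{\op{A}}^{-1} \otimes \omega_A^{-1}$ implicit in the proof of Lemma~\ref{technical}.

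To conclude, I would combine the two identities above with the equality established at the end of subsection~\ref{action pairing}, namely $\hh_{A^e}(A) \cup_{\eA}(\lambda \otimes \mu) = \lambda \cup \mu$ for $\lambda \in \HH_0(\op{A})$ and $\mu \in \HH_0(A)$, where the $\cup$ on the right is the pairing of formula~(\ref{cuppairing}). Applying this with $\lambda = \hh_{\op{A}}(N,g)$ and $\mu = \hh_A(M,f)$ produces precisely the announced Riemann--Roch formula. The main obstacle is the K\"unneth step: one must verify carefully that the diagrammatic construction of $\hh_{\eA}(N \otimes M, g \otimes f)$, which proceeds through the isomorphism of Lemma~\ref{unit counit} and the evaluation $\varepsilon$, factors as an external product of the constructions of $\hh_{\op{A}}(N,g)$ and $\hh_A(M,f)$. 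All remaining ingredients are direct applications of results already proved in the preceding sections.
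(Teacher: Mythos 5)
Your proposal is correct and follows essentially the same route as the paper: reduce to Proposition~\ref{adapt} via the module $N \otimes M$ with endomorphism $g \otimes f$ (using Proposition~\ref{conjugaison} to transport along the canonical isomorphism $A \Lte_{\eA}(N\otimes M)\simeq N\Lte_A M$), invoke the K\"unneth compatibility $\hh_{\eA}(N\otimes M,g\otimes f)=\Ku{\op{A}}{A}(\hh_{\op{A}}(N,g)\otimes\hh_A(M,f))$, and conclude with the identification $\hh_{A^e}(A)\cup_{\eA}(\lambda\otimes\mu)=\lambda\cup\mu$ from subsection~\ref{action pairing}. The K\"unneth step you flag as the main obstacle is likewise asserted without detailed verification in the paper's own chain of equalities, so your proposal is at the same level of rigor.
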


\begin{proof}
Let $u$ be the canonical isomorphism from $A \Lte_{\eA} (N \otimes M)$ to $ N \Lte_{A} M$. By definition of the pairing we have
\begin{equation*}
\begin{split}
\langle \hh_{\op{A}}(N,g) , \hh_A(M,f)\rangle &=\Hn^0(\Phi_A) \circ \Hn^{0}(\Ku{\op{A}}{A})(\hh_{\op{A}}(N,g) \otimes \hh_A(M,f))\\
                                       &= \hh_{A^e}(A) \cup \hh_{\eA}(N \otimes M , g \otimes f)\\
                                       &=\hh_k(A \Lte_{\eA} (N \otimes M), \id_A \Lte_{\eA} (g \otimes f))\\
                                       &=\hh_k(A \Lte_{\eA} (N \otimes M), u^{-1} \circ (g \Lte_A f) \circ u)\\
                                       &=\hh_k(N \Lte_{A} M, g \Lte_A f).
\end{split}
\end{equation*}
The last equality is a consequence of Proposition \ref{conjugaison}.
\end{proof}

\begin{Rem}
By adapting the proof of Proposition \ref{adapt}, we are also able to obtain the following result that should be compared to \cite[Theorem 4.3.4]{KS3}
\begin{thm}
Let $A$, $B$, $C$ be proper homologically smooth dg algebras. Let $K_1 \in \Der_{\mathrm{perf}}(A \otimes \op{B})$, $K_2 \in \Der_{\mathrm{perf}}(B \otimes \op{C})$, $f_1 \in \Hom_{A \otimes \op{B}}(K_1,K_1)$ and $f_2 \in \Hom_{B \otimes \op{C}}(K_2,K_2)$. Then
\begin{equation*}
\hh_{A \otimes \op{C}}(K_1 \Lte_B K_2, f_1 \Lte_B f_2)=\hh_{A \otimes \op{B}}(K_1,f_1) \cup_B \hh_{B \otimes \op{C}}(K_2,f_2). 
\end{equation*}
\end{thm}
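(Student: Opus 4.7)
The plan is to reduce to the cup-product machinery already developed and then perform the same diagram chase used in the proof of Proposition \ref{adapt}. By Proposition \ref{action} (ii), for any $\lambda \in \HH_0(B \otimes \op{C})$ we have
\begin{equation*}
\Hn^0(\Phi_{\K_1})(\lambda) = \hh_{A \otimes \op{B}}(\K_1) \cup_B \lambda.
\end{equation*}
Applying this to $\lambda = \hh_{B \otimes \op{C}}(\K_2)$, the statement is equivalent to
\begin{equation*}
\hh_{A \otimes \op{C}}(\K_1 \Lte_B \K_2) = \Hn^0(\Phi_{\K_1})\bigl(\hh_{B \otimes \op{C}}(\K_2)\bigr),
\end{equation*}
which is the direct analogue of the equality $\hh_k(A \Lte_{\eA} M, \id) = \Hn^0(\Phi_A)(\hh_{\eA}(M))$ established in Proposition \ref{adapt}, with the dictionary $(k, \eA, A, M) \leftrightarrow (A \otimes \op{C}, B \otimes \op{C}, \K_1, \K_2)$.

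To prove the displayed equality I would adapt the commutative diagram of Proposition \ref{adapt}. Using Lemma \ref{lemme de morphisation}, the Hochschild class $\hh_{A \otimes \op{C}}(\K_1 \Lte_B \K_2)$ is realised as the morphism $\w_{A \otimes \op{C}}^{-1} \to A \otimes \op{C}$ obtained by composing the coevaluation for the perfect $(A \otimes \op{C})$-module $\K_1 \Lte_B \K_2$ with the corresponding evaluation, and $\hh_{B \otimes \op{C}}(\K_2)$ is similarly described as a morphism $\w_{B \otimes \op{C}}^{-1} \to B \otimes \op{C}$ factoring through $\K_2 \otimes \Du_{B \otimes \op{C}}(\K_2)$. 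By construction $\Phi_{\K_1}$ tensors this latter description on the left with $\K_1$ and on the right with $\Du_{A \otimes \op{B}}(\K_1)$ and contracts along $\op{B}$, using the generalised coevaluation/evaluation maps of Lemma \ref{technical}. The required identity then takes the form of a large rectangle, whose top row produces $\Phi_{\K_1}(\hh(\K_2))$ and whose bottom row produces $\hh(\K_1 \Lte_B \K_2)$, connected by canonical identifications of the dualising and coevaluation/evaluation data.

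The key technical ingredient for the commutativity of the rectangle is the compatibility of the generalised coevaluation/evaluation morphisms of Lemma \ref{technical} with tensor products over $B$. Concretely, one shows that the coevaluation for $\K_1 \Lte_B \K_2$ factors through the two coevaluations for $\K_1$ and $\K_2$ glued along the canonical arrow $k \to \RHom_{B^e}(B, B) \to \op{B} \Lte_{B^e} \w_B^{-1}$, and dually for evaluation using the integration morphism of Proposition \ref{integration}. Both factorisations are obtained from the relative duality isomorphism of Theorem \ref{contraction}, which identifies $\Du_{A \otimes \op{C}}(\K_1 \Lte_B \K_2)$ with $\Du_{A \otimes \op{B}}(\K_1) \Lte_{\op{B}} \w_{\op{B}}^\ast \Lte_{\op{B}} \Du_{B \otimes \op{C}}(\K_2)$, and from Proposition \ref{split} which splits $\w_{A \otimes \op{C}}^{-1}$ as $\w_A^{-1} \otimes \w_{\op{C}}^{-1}$.

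The main obstacle is therefore the careful bookkeeping in this diagram: several instances of $\w_B, \w_{\op{B}}, \w_B^{-1}$ and of dualising complexes for the various bimodules must be cancelled or identified consistently, and one has to verify that the pairing $\cup_B$ constructed in the proof of Proposition \ref{action} matches precisely the splitting of the coevaluation/evaluation data produced above. Once this rectangular diagram is written down, its commutativity follows, as in Proposition \ref{adapt}, from naturality of the evaluation map $\varepsilon$ together with the compatibility of tensor product with duality recorded in Proposition \ref{dual equi} and Theorem \ref{contraction}; no further new input is required.
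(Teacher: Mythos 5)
Your proposal takes essentially the same route as the paper, which offers no proof of this theorem beyond the remark that it follows ``by adapting the proof of Proposition~\ref{adapt}'': your reduction via Proposition~\ref{action}~(ii) to the identity $\hh_{A \otimes \op{C}}(\K_1 \Lte_B \K_2)=\Hn^0(\Phi_{\K_1})(\hh_{B \otimes \op{C}}(\K_2))$, checked by the analogue of that commutative diagram built from Lemmas~\ref{lemme de morphisation} and~\ref{technical}, Theorem~\ref{contraction} and the integration morphism, is precisely that adaptation. One notational slip: the middle factor produced by Theorem~\ref{contraction} is $(\op{B})^{\ast}\simeq \w_{\op{B}}$, not $\w_{\op{B}}^{\ast}$.
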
 
\end{Rem}

\bibliographystyle{smfplain}
\bibliography{bibliothese}

\end{document}